\newtheorem{theorem}{Theorem}
\newtheorem*{theoremintro}{Theorem}
\newtheorem{lemma}{Lemma}
\newtheorem{corollary}[theorem]{Corollary}
\newtheorem{definition}{Definition}
\numberwithin{equation}{section}
\def\esf{\mathbb{S}}
\def\R{\mathbb{R}}
\def\C{\mathbb{C}}
\def\B{\mathbb{B}}
\def\N{\mathbb{N}}
\def\D{\mathbb{D}}
\def\ee{\textsc{e}}
\def\J{\mathcal{J}}
\def\I{\mathcal{I}}
\def\pp{\mathtt{P}}
\def\cte{\textrm{const\,}}
\newcommand{\til}[1]{\widetilde{#1}}
\newcommand{\gorro}[1]{\widehat{#1}}
\newcommand{\longui}{\operatorname{length}}
\newcommand{\dist}{\operatorname{dist}}
\newcommand{\intc}{\operatorname{Int}}
\newcommand{\diam}{\operatorname{diam}}
\def\a{{\alpha}}
\def\g{{\gamma}}
\def\G{{\Gamma}}
\def\l{{\lambda}}
\def\de{{\delta}}
\def\be{{\beta}}
\def\ve{{\varepsilon }}
\def\s{{\sigma}}
\def\ep{{\epsilon}}
\begin{document}

\title{Compact complete minimal immersions in $\R^3$}

\author{Antonio Alarc\'{o}n}

\address{Departmento de Geometr\'{i}a y Topolog\'{i}a, Universidad de Granada, E-18071 Granada, Spain}

\curraddr{Departamento de Matem\'{a}tica Aplicada, Universidad de
Murcia, E-30100 Espinardo, Murcia, Spain}

\email{ant.alarcon@um.es}

\thanks{The author was partially supported by Spanish MEC-FEDER Grant MTM2007-61775 and
Regional J. Andaluc\'{i}a Grant P06-FQM-01642.}

\subjclass[2000]{Primary 53A10; Secondary 53C42, 49Q05, 49Q10}

\keywords{Complete minimal surfaces, Plateau problem}

\begin{abstract}
In this paper we find, for any arbitrary finite topological type,
a compact Riemann surface $\mathcal{M},$ an open domain
$M\subset\mathcal{M}$ with the fixed topological type, and a
conformal complete minimal immersion $X:M\to\R^3$ which can be
extended to a continuous map $X:\overline{M}\to\R^3,$ such that
$X_{|\partial M}$ is an embedding and the Hausdorff dimension of
$X(\partial M)$ is $1.$

We also prove that complete minimal surfaces are dense in the
space of minimal surfaces spanning a finite set of closed curves
in $\R^3$, endowed with the topology of the Hausdorff distance.
\end{abstract}

\maketitle


\section{Introduction}\label{sec: intro}

In 1996, Nadirashvili \cite{N1} constructed the first example of a
complete bounded minimally immersed disk in $\R^3,$ giving a
counterexample to the Calabi-Yau conjecture \cite{C}. Calabi
conjectured that a complete minimal surface in $\R^3$ must be
unbounded. Nadirashvili's work seemed to be the end point of a
classical problem. However, his methods and ideas were the
beginning of a significant development in the construction of
complete bounded minimal surfaces. Much work has been done to
study the topology and limit sets of complete bounded minimal
immersions (see for instance \cite{LMM,MM1,MM2}).

Assume $M$ is an open proper subdomain of a compact Riemann
surface and let $X:M\to\R^3$ be a conformal complete minimal
immersion. Denote by $\G$ the limit set of the surface $X(M),$
i.e., $\G\subset\R^3$ is the limit set of $X(p)$ for $p\to\partial
M.$ It was proven by Ferrer, Mart\'{i}n and the author \cite{AFM}
that any open surface of finite topology can be properly and
minimally immersed in any convex domain of $\R^3$ or any bounded
and smooth domain of $\R^3.$ In particular, its limit set is
contained in the boundary of that domain. Moreover, the following
deep result was recently proved by Ferrer, Mart\'{i}n and Meeks
\cite{FMM}: Let $D$ be a domain which is convex (possibly $D =
\R^3$) or smooth and bounded; given any open surface $M,$ there
exists a complete proper minimal immersion $X:M \to D.$

On the other hand, some non-existence theorems imposed some limits
to the theory. Mart\'{i}n, Meeks and Nadirashvili \cite{MMN}
showed the existence of a domain $D\subset\R^3$ for which there is
no complete properly immersed in $D$ minimal surfaces.
Furthermore, there exist requirements for the limit set of compact
complete minimal immersions, according to the following
definition:
\begin{definition}\label{def: compact}
By a compact minimal immersion we mean a minimal immersion
$X:M\to\R^3,$ where $M$ is an open region of a compact Riemann
surface $\mathcal{M},$ and such that $X$ can be extended to a
continuous map $X:\overline{M}\to\R^3.$
\end{definition}
If we take $M=\D,$ the unit disk of $\C,$ and we assume that an
immersion $X:M\to\R^3$ is complete and compact, then its limit set
$\G$ cannot be a subset of the boundary of a cube in $\R^3$
\cite{N2}, i.e., $X$ cannot be proper in a cube. In fact, if such
an immersion is proper in a regular domain $C\subset\R^3,$ then
the second fundamental form of the surface $\partial C$ at any
point of the limit set must be nonnegatively definite \cite{AN}.
Moreover, $C$ cannot be a polyhedron \cite{AN}.

The study of the Calabi-Yau problem gave rise to new lines of work
and techniques. Among other things, these new ideas established a
surprising relationship between the theory of complete minimal
surfaces in $\R^3$ and the Plateau problem. This problem consists
of finding a minimal surface spanning a given closed curve in
$\R^3,$ and it was solved independently by Douglas \cite{D} and
Rad\'{o} \cite{R}, for any Jordan curve. The link between complete
minimal surfaces and the Plateau problem is the existence of
compact complete minimal immersions in $\R^3.$ Mart\'{i}n and
Nadirashvili \cite{MN} constructed compact complete conformal
minimal immersions $X:\D\to\R^3$ such that $X_{|\partial \D}$ is
an embedding and $X(\esf^1)$ is a Jordan curve with Hausdorff
dimension $1.$ Furthermore, they showed that the set of Jordan
curves $X(\esf^1)$ constructed by the above procedure is dense in
the space of Jordan curves of $\R^3$ with the Hausdorff distance.

The aim of this paper is to join the techniques used in the
construction of complete bounded minimal surfaces with arbitrary
finite topology, and those used to construct complete minimal
disks spanning Jordan curves, in order to prove the following
result.

\begin{theoremintro}
There exist compact complete minimal immersions $X:M\to\R^3$ of
arbitrary finite topological type and such that the extended map
$X:\overline{M}\to\R^3$ satisfies that $X_{|\partial M}$ is an
embedding and the Hausdorff dimension of $X(\partial M)$ is $1.$

Moreover, for any finite family $\Sigma$ of closed curves in
$\R^3$ which admits a solution to the Plateau problem, and for any
$\xi>0,$ there exists a minimal immersion $X:M\to\R^3$ in the
above conditions and such that $\de^H(\Sigma,X(\partial M))<\xi,$
where $\de^H$ means the Hausdorff distance.
\end{theoremintro}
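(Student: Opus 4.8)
The plan is to mirror the strategy of Mart\'{i}n--Nadirashvili \cite{MN} for the disk, but starting from a proper complete minimal immersion of arbitrary finite topological type into a convex domain, as provided by \cite{AFM,FMM}. The construction will be an iterative, Runge-type surgery argument in the spirit of Nadirashvili and its successors, producing a sequence of conformal minimal immersions $X_n\colon M_n\to\R^3$ defined on shrinking open subdomains $M_n$ of a fixed compact Riemann surface $\mathcal{M}$, all of the prescribed topological type, together with a decreasing sequence of compact exhaustion pieces.

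\medskip

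First I would reduce to the approximation statement. Given a finite family $\Sigma$ of closed curves admitting a Plateau solution, fix such a solution: a (possibly branched, possibly not embedded) minimal surface $Y\colon\overline{N}\to\R^3$ with $Y(\partial N)=\Sigma$, where $N$ has some finite topological type. After a conformal reparametrization and a small perturbation one may assume $Y$ is a conformal minimal immersion on a neighborhood of $\overline{N}$ minus finitely many points, and that $\partial N$ is mapped to a family of closed curves $\ve$-close to $\Sigma$ in Hausdorff distance. The goal is then to replace $Y$ near the boundary by a complete immersion whose boundary values stay uniformly close to $Y(\partial N)$; completeness is produced in the interior. Concretely, one picks a large compact piece $K\subset N$ on which $Y$ is a genuine immersion, and works in the collar $\overline{N}\setminus\intc K$.

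\medskip

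The core is the inductive step: given $X_n$ defined on $M_n$, a compact piece $\Omega_n\subset M_n$, a constant $r_n>0$ such that the intrinsic distance from $\partial\Omega_n$ to $\partial M_n$ in the $X_n$-metric exceeds $r_n$, and a control $\|X_n-X_{n-1}\|<\eta_n$ on $\Omega_{n-1}$, one constructs $X_{n+1}$ on a slightly smaller $M_{n+1}\supset\Omega_{n+1}\supset\Omega_n$ so that (i) the intrinsic distance from $\partial\Omega_{n+1}$ to $\partial M_{n+1}$ exceeds $r_{n+1}=r_n+c$ for a fixed increment $c$, (ii) $\|X_{n+1}-X_n\|<\eta_{n+1}$ on $\Omega_n$ with $\sum\eta_n<\infty$, and (iii) $X_{n+1}(\partial M_{n+1})$ stays within $2^{-n}$ Hausdorff distance of $Y(\partial N)$. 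Step (i) is achieved by the López--Ros deformation / Nadirashvili labyrinth technique: one builds a labyrinth of compact sets in the collar $M_n\setminus\Omega_n$ adapted to the conformal structure and, using Runge's theorem on the non-simply-connected surface together with the Weierstrass representation, produces a new immersion whose metric is large across the labyrinth walls while $C^0$-close to $X_n$ off a neighborhood of them. Controlling the boundary, step (iii), additionally requires pushing $\partial M_{n+1}$ inward and using the maximum-principle-type estimates on the harmonic conjugate data so the boundary curve does not wander; this is exactly the refinement Mart\'{i}n--Nadirashvili added to the disk case, and here it must be carried out uniformly over the several boundary components and through handles. Passing to the limit, $X=\lim X_n$ is a conformal minimal immersion on $M=\bigcap M_n$ (an open domain of $\mathcal{M}$ of the prescribed topology), complete by (i) since intrinsic distances to the boundary diverge, and continuous up to $\overline{M}$ with $X(\partial M)$ within $\xi$ of $\Sigma$ by (iii). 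Embeddedness of $X_{|\partial M}$ and the Hausdorff dimension $1$ claim for $X(\partial M)$ follow by the same quantitative estimates as in \cite{MN}: the boundary curves are obtained as uniform limits of real-analytic curves with geometrically controlled oscillation, yielding both injectivity (after a generic choice) and the dimension bound.

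\medskip

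The main obstacle I anticipate is carrying out the labyrinth-and-Runge step on a surface of higher genus and with several ends while \emph{simultaneously} keeping the boundary within prescribed Hausdorff distance of $\Sigma$: the two requirements pull in opposite directions, since making the metric blow up near $\partial M_n$ tends to displace the boundary, and on a non-simply-connected surface one has less freedom in prescribing the Weierstrass data and the periods must be killed at every stage. Reconciling the period-closing conditions (needed for $X_{n+1}$ to be well defined on $M_{n+1}$) with the López--Ros parameter choices that produce completeness, all while respecting the $C^0$ boundary control, is the technical heart; this is precisely where the ``finite topology'' machinery of \cite{AFM} must be merged with the ``Plateau'' machinery of \cite{MN}.
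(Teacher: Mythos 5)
Your proposal follows essentially the same route as the paper: both merge the finite-topology labyrinth/L\'{o}pez--Ros machinery of \cite{AFM} with the boundary-control refinements of \cite{MN}, iterate a deformation lemma that increases intrinsic distance to the boundary while keeping $C^0$ and Hausdorff-distance control, and pass to the limit, obtaining boundary injectivity and the dimension-one bound from the same quantitative estimates (the paper packages the iteration into a two-level scheme, Lemma~\ref{lem: gafa-3} feeding Lemma~\ref{lem: gafa-4} feeding Theorem~\ref{teo: existe}, but this is organizational rather than a different idea). The technical obstacle you single out --- reconciling period-killing and the L\'{o}pez--Ros parameters with the boundary control on a non-simply-connected surface --- is exactly what the paper resolves by importing \cite[Lemma 3]{AFM} essentially verbatim and only adding the Jordan-curve property of the deformed boundary cycles.
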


Unfortunately, our arguments do not guarantee that the boundary
curves of $M$ are Jordan curves in $\mathcal{M}.$ Therefore,
although $X_{|\partial M}$ is an embedding, we cannot assert that
$X(\partial M)$ consists of Jordan curves. It is also important to
notice that $X$ cannot be an embedding following the recent
result by Colding and Minicozzi \cite{CM} which asserts that any
complete embedded minimal surface in $\R^3$ with finite topology
is proper in $\R^3.$ In particular, such a surface must be
unbounded, and so it cannot be contained in a compact set.

In this paper we also prove the following density result for
compact complete minimal immersions in $\R^3.$

\begin{theoremintro}
Complete minimal surfaces are dense in the space of minimal
surfaces spanning a finite set of closed curves in $\R^3$, endowed
with the topology of the Hausdorff distance.
\end{theoremintro}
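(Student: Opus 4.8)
The plan is to deduce this statement from the proof of the first Theorem rather than from its statement, exploiting the fact that the construction behind the first Theorem is \emph{perturbative}: it starts from an arbitrary compact minimal immersion spanning the prescribed family of closed curves and modifies it --- through a finite sequence of deformations of Runge and L\'opez--Ros type --- so as to make the induced metric complete and the boundary an embedded curve of Hausdorff dimension $1,$ all the while keeping the image of the surface inside an arbitrarily thin prescribed neighborhood of the original one. Concretely, I would first isolate from that proof the following assertion: \emph{given a compact minimal immersion $Y_0\colon N_0\to\R^3$ spanning a finite family $\Sigma_0$ of closed curves and given $\xi>0,$ there is a compact complete minimal immersion $X\colon M\to\R^3,$ with $M$ of the same topological type as $N_0,$ such that $X_{|\partial M}$ is an embedding, the Hausdorff dimension of $X(\partial M)$ is $1,$ and $\de^H\big(X(\overline M),Y_0(\overline{N_0})\big)<\xi.$} This is what the scheme delivers: each elementary deformation can be performed with arbitrarily small $C^0$-effect on the immersion, and completeness is achieved keeping the surface inside an arbitrarily small neighborhood of the previous stage, so the successive $C^0$-errors can be summed to stay within $\xi$ of $Y_0(\overline{N_0}).$

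Granting this, the theorem is immediate. Let $Y\colon N\to\R^3$ be a minimal surface spanning a finite set $\Sigma$ of closed curves in $\R^3$; then $Y$ is a compact minimal immersion in the sense of Definition~\ref{def: compact} with $Y(\partial N)=\Sigma.$ Fix $\xi>0$ and apply the assertion above with $Y_0=Y$ and $\Sigma_0=\Sigma.$ The resulting $X$ is a complete minimal surface which spans the finite family of closed curves $X(\partial M)$ and satisfies $\de^H\big(X(\overline M),Y(\overline N)\big)<\xi.$ Since $Y$ and $\xi>0$ are arbitrary, complete minimal surfaces are dense, in the Hausdorff distance, in the space of minimal surfaces spanning a finite set of closed curves in $\R^3.$

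If one prefers to invoke the first Theorem only as stated, a short preliminary step does the job. Given $Y$ and $\xi>0,$ choose pairwise disjoint Jordan curves $\a_1,\dots,\a_m$ in the interior of $N,$ bounding pairwise disjoint disks $\Delta_1,\dots,\Delta_m\subset N,$ so small that the curves $Y(\a_j)$ are embedded and pairwise disjoint and that $Y(\overline N)$ lies in the $(\xi/3)$-neighborhood of $\Sigma':=\Sigma\cup\{Y(\a_1),\dots,Y(\a_m)\}$ (near $\partial N$ this holds automatically by uniform continuity of $Y$ on $\overline N,$ while away from $\partial N$ it forces the $Y(\a_j)$ to form a $(\xi/3)$-net in $Y(N)$). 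Then $\Sigma'$ is a finite family of closed curves admitting a solution to the Plateau problem, namely $Y_{|\overline N\setminus\bigcup_j\intc(\Delta_j)}.$ Applying the first Theorem to $\Sigma'$ and $\xi/3$ yields a compact complete minimal immersion $X\colon M\to\R^3$ with $X_{|\partial M}$ an embedding, $\dim_H X(\partial M)=1,$ and $\de^H(\Sigma',X(\partial M))<\xi/3$; re-examining the construction, $X(\overline M)$ also lies in the $(\xi/3)$-neighborhood of $Y(\overline N).$ Since moreover $Y(\overline N)$ lies in the $(\xi/3)$-neighborhood of $\Sigma',$ which in turn lies in the $(\xi/3)$-neighborhood of $X(\partial M)\subset X(\overline M),$ we obtain $\de^H\big(X(\overline M),Y(\overline N)\big)<\xi,$ and conclude as before.

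The only delicate point --- hence the main obstacle --- is precisely the passage from the \emph{statement} of the first Theorem to the perturbative version used above: one must verify that the inductive construction keeps the surface inside an arbitrarily small prescribed neighborhood of the initial spanning immersion and (for the second route) that it comes within $\xi$ of every point of that surface. This is structurally present in the Runge / L\'opez--Ros machinery shared by all these constructions, so it reduces to bookkeeping the $C^0$-sizes of the successive deformations rather than to a genuinely new argument.
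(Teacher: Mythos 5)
Your proposal is correct and follows essentially the same route as the paper: the ``perturbative assertion'' you isolate from the proof of the first Theorem is precisely what the paper states explicitly as Theorem~\ref{teo: existe} (in particular its property (V), the Hausdorff-distance bound $\de^H\big(\phi(\overline{M(\J)}),\Phi_\ep(\overline{M_\ep})\big)<\ep$ between the \emph{whole} images), from which the density statement follows immediately exactly as in your second paragraph. Your alternative route via auxiliary curves is unnecessary, since the paper makes the interior Hausdorff control part of the main theorem rather than leaving it buried in the construction.
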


\noindent{\bf Acknowledgments.} The author is indebted to
Francisco Mart\'{i}n for some informative conversations on the
paper \cite{MN}.


\section{Preliminaries}\label{sec: preli}

This section is devoted to briefly summarize the notation and
results that we use in the paper.

\subsection{Riemann surfaces background}

Throughout the paper we work on a compact Riemann surface endowed
with a Riemannian metric. We consider that the following data are
fixed.

\begin{definition}\label{def: M-ds}
Let $M'$ be a compact Riemann surface of genus $\s\in\N \cup \{0\},$ and $ds^2$ a Riemannian metric in $M'.$
\end{definition}

 Consider a subset $W \subset M',$ and a Riemannian metric $d\tau^2$ in $W.$ Given a curve $\a$ in $W,$ by $\longui_{d\tau}(\a)$ we mean the length of $\a$ with respect to the metric $d\tau^2.$ Moreover, we define:
\begin{enumerate}[1]
\item[$\bullet$] $\dist_{(W,d\tau)}(p,q)=\inf \{\longui_{d\tau}(\alpha) \: | \: \alpha:[0,1]\rightarrow W, \; \alpha(0)=p,\alpha(1)=q \}$, for any $p,q\in W.$
\item[$\bullet$] $\dist_{(W,d\tau)}(T_1,T_2)=\inf \{\dist_{(W,d\tau)}(p,q) \;|\;p \in T_1, \;q \in T_2 \}$, for any $T_1, T_2 \subset  W.$
\item[$\bullet$] $\diam_{d\tau}(W)=\sup\{\dist_{(W,d\tau)}(p,q)\;|\;p,q\in W\}.$
\end{enumerate}
Throughout the paper, we use to work with a domain $W$ in $M'$ and
a conformal minimal immersion $Y:\overline{W}\to\R^3.$ Then, by
$ds_{Y}^2$ we mean the Riemannian metric induced by $Y$ in
$\overline{W}.$ We also write $\dist_{(\overline{W},Y)}(T_1,T_2)$
instead of $\dist_{(\overline{W},ds_Y)}(T_1,T_2),$ for any sets
$T_1$ and $T_2$ in $\overline{W}.$

For $\textsc{e}\in\N$, consider $\D_1,\ldots,\D_\textsc{e}\subset M'$ open
disks so that $\{\g_i:=\partial \D_i\}_{i=1}^{\textsc{e}}$ are analytic Jordan
curves and $\overline{\D}_i\cap \overline{\D}_j=\emptyset$ for all $i\neq j$.

\begin{definition}\label{multicycle}
Each curve $\g_i$ will be called a cycle on $M'$ and the family
$\mathcal{ J}=\{\g_1,\ldots,\g_\textsc{e}\}$ will be called a multicycle on $M'$.
We denote by $\intc(\g_i)$ the disk $\D_i$, for $i=1,\ldots, \textsc{e}.$
We also define $M(\mathcal{ J})=M'\setminus(\cup_{i=1}^\textsc{e} \overline{\intc(\g_i)})$.
\end{definition}
Given $\mathcal{ J}=\{\g_1,\ldots,\g_\textsc{e}\}$ and $\mathcal{
J}'=\{\g_1',\ldots,\g_\textsc{e}'\}$ two multicycles on $M'$ we
write $\mathcal{ J}'< \mathcal{ J}$ if $\overline{\intc(\g_i)}
\subset \intc (\g_i')$ for $i=1,\ldots, \textsc{e}.$ Notice that
$\mathcal{ J}'< \mathcal{ J}$ implies $\overline{M(\mathcal{ J}')}
\subset M(\mathcal{ J})$.

Let $\mathcal{ J}=\{\g_1,\ldots,\g_\textsc{e}\}$ be a multicycle
on $M'.$ For $\ep>0$ small enough, we can consider the multicycle
$\mathcal{J}^\ep=\{\g_1^\ep,\ldots,\g_\textsc{e}^\ep\},$ where by
$\g_i^\ep$ we mean the cycle satisfying $\overline{\intc
(\g_i)}\subset\intc(\g_i^\ep)$ and $\dist_{(M',ds)}(q,\g_i)=\ep$
for all $q\in \g_i^\ep$ and $i=1,\ldots, \textsc{e}.$ Notice that
$\J^\ep<\J.$

\begin{definition}\label{def: infimo}
Let $K$ be a compact subset in the Riemann surface $M',$ $n\in\N,$
and $f:K\to\R^3$ a continuous embedding. We define
\[
\Psi(K,f,n):=\frac1{2n^2} \inf \left\{ \|f(p)-f(q)\|\;\left|\;
p\neq q\in K\;,\; \dist_{(M',ds)}(p,q)>\frac1{n}\right\}\right..
\]
Notice that $\Psi(K,f,n)>0.$
\end{definition}


\subsection{Minimal surfaces background}

The theory of complete minimal surfaces is closely related to the theory of Riemann surfaces. This is due to the fact that any such surface is given by a triple $\Phi=(\Phi_1, \Phi_2, \Phi_3)$ of holomorphic 1-forms defined on some Riemann surface such that
\begin{equation} \label{eq:conforme}
\Phi_1^2+\Phi_2^2+\Phi_3^2=0,
\end{equation}
\[
\|\Phi_1\|^2+\|\Phi_2\|^2+\|\Phi_3\|^2 \neq 0,
\]
and all periods of the $\Phi_j$ are purely imaginary. Here we
consider $\Phi_i$ to be a holomorphic function times $dz$ in a
local parameter $z$. Then the minimal immersion $X:M \rightarrow
\R^3$ can be parameterized by $z \mapsto \mbox{Re} \int^z \Phi.$
The above triple is called the Weierstrass representation of the
immersion $X$. Usually, the first requirement \eqref{eq:conforme}
(which ensures the conformality of $X$) is guaranteed by
introducing the formulas
\[
\Phi_1 =\frac12 \left( 1-g^2\right) \, \eta, \quad \Phi_2
=\frac{\rm i}2 \left( 1+g^2\right) \, \eta, \quad \Phi_3= g \,
\eta,
\]
with a meromorphic function $g$ (the stereographic projection of the Gauss map) and a holomorphic 1-form $\eta$. The metric of $X$ can be expressed as
\begin{equation}\label{eq:metric}
ds_X^2=\tfrac12 \|\Phi\|^2=\left(\tfrac12\left(1+|g|^2\right) \|\eta \|\right)^2.
\end{equation}

\subsubsection{The L\'{o}pez-Ros transformation}
The proof of Lemma \ref{lem: gafa-3} exploits what has come to be
called the L\'{o}pez-Ros transformation. If $M$ is a Riemann
surface and $(g,\eta)$ are the Weierstrass data of a minimal
immersion $X:M \rightarrow \R^3$, we define on $M$ the data
\[
\widetilde g= \frac{g}{h}, \qquad \widetilde \eta= \eta \cdot h,
\]
where $h:M \rightarrow \C$ is a holomorphic function without
zeros. If the periods of this new Weierstrass representation are
purely imaginary, then it defines a minimal immersion $\widetilde
X: M \rightarrow \R^3$. This method provides us with a powerful
and natural tool for deforming minimal surfaces. From our point of
view, the most important property of the resulting surface is that
the third coordinate function is preserved. Note that the
intrinsic metric is given by \eqref{eq:metric} as
\[
ds_{\widetilde
X}^2=\left(\frac12\left(|h|+\frac{|g|^2}{|h|}\right)\, \|\eta \|
\right)^2.
\]
This means that we can increase the intrinsic distance in a prescribed compact of $M$, by using  suitable functions $h$.


\subsection{Hausdorff distance}

Throughout the paper we denote by $\de^H(C,D)$ the Hausdorff distance
between two nonempty compact subsets of $\R^n,$ $C$ and $D,$ i.e.,
\[
\de^H(C,D)=\max\left\{ \sup_{x\in C} \inf_{y\in D} \|x-y\|\;,\;
\sup_{y\in D} \inf_{x\in X} \|x-y\|\right\}.
\]

\subsection{Preliminary lemma}

Consider a multicycle $\I=\{\Sigma_1,\ldots,\Sigma_\ee\}$ on the
compact surface $M'.$ We denote $M=\overline{M(\I)}.$ For any
$i\in\{1,\ldots,\ee\},$ let $\be_i:\Sigma_i\to\R^3$ be an analytic
curve. Finally, given $\mathcal{T}(\Sigma_i)$ a
tubular neighborhood of $\Sigma_i$ in $M,$ 
we denote by $\pp_i:\mathcal{T}(\Sigma_i)\to\Sigma_i$ the natural
projection.

Taking this set up into account, the following result was
essentially proved by Ferrer, Mart\'{i}n and the author
\cite[Lemma 3]{AFM}. It concentrates most of the technical
computations required in the main lemma of this paper (Lemma
\ref{lem: gafa-4}).

\begin{lemma}\label{lem: gafa-3}
Consider $\J=\{\g_1,\ldots,\g_\ee\}$ a multicycle on $M,$
$X:\overline{M(\J)}\to\R^3$ a conformal minimal immersion, $p_0$ a
point in $M(\J),$ and $r>0,$ such that
\begin{enumerate}[1]
\item[$\bullet$] $X(p_0)=0.$
\item[$\bullet$] $\g_i\subset \mathcal{T}(\Sigma_i),$ for any $i\in\{1,\ldots,\ee\}.$
\item[$\bullet$] $\|X(p)-\be_i(\pp_i(p))\|<r,$ for all $p\in \g_i$ and for all $i=1,\ldots,\ee.$
\end{enumerate}
Then, for any $s>0,$ and any $\ep>0$ so that $p_0\in M(\J^\ep),$
there exist a multicycle
$\widetilde{\J}=\{\widetilde{\g}_1,\ldots,\widetilde{\g}_\ee\}$
and a conformal minimal immersion
$\widetilde{X}:\overline{M(\widetilde{\J})}\to\R^3$ satisfying
\begin{enumerate}[\rm ({L\ref{lem: gafa-3}}.1)]
\item $\til{X}(p_0)=0.$

\item $\til{X}(\til{\g}_i)$ is a Jordan curve, $\forall
i=1,\ldots,\ee.$

\item $\til{\g}_i\subset \mathcal{T}(\Sigma_i),$ for all $i=1,\ldots,\ee.$

\item $\J^\ep<\til{\J}<\J.$

\item $s<\dist_{(\overline{M(\til{\J})},\til{X})}(p,\til{\J}),$ $\forall p\in \J^\ep.$

\item $\|\til{X}-X\|<\ep$ in $\overline{M(\J^\ep)}.$

\item $\|\til{X}(p)-\be_i(\pp_i(p))\|<R=\sqrt{4s^2+r^2}+\ep,$ $\forall p\in\til{\g}_i,$ $\forall i=1,\ldots,\ee.$
\end{enumerate}
\end{lemma}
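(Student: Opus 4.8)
The plan is to obtain $\widetilde X$ from $X$ by a two-stage deformation, combining a López–Ros–type dilation in well-chosen local coordinates with a careful choice of the new multicycle. The underlying philosophy is the classical Nadirashvili scheme adapted to several boundary components: we push the distance from $p_0$ to the boundary past $s$ by a labyrinth-type argument, while controlling the extrinsic diameter so as not to move the image of the new boundary too far. Since Lemma~\ref{lem: gafa-3} is stated as ``essentially proved'' in \cite[Lemma 3]{AFM}, the real content of the present sketch is to indicate which pieces are imported verbatim and which need the extra bookkeeping forced by conclusions (L\ref{lem: gafa-3}.2), (L\ref{lem: gafa-3}.3) and (L\ref{lem: gafa-3}.7).

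First I would fix $\ep>0$ with $p_0\in M(\J^\ep)$ and choose an intermediate multicycle $\J'$ with $\J^\ep<\J'<\J$, so close to $\J$ that $X$ and its first derivatives change by less than a prescribed amount on $\overline{M(\J)}\setminus M(\J')$; in particular the hypotheses $\g_i\subset\mathcal T(\Sigma_i)$ and $\|X(p)-\be_i(\pp_i(p))\|<r$ persist in a neighborhood of each $\g_i$. On the compact ``collar'' region between $\J'$ and $\J$ one constructs, for each $i$, a finite labyrinth of compact sets (the Jorge–Xavier/Nadirashvili labyrinth transported to the annular neighborhood of $\g_i$ by a conformal chart) together with a holomorphic function $h_i$ without zeros that is close to $1$ off the labyrinth but very large on it. Applying the López–Ros transformation with data $(\widetilde g,\widetilde\eta)=(g/h,\eta h)$, where $h$ glues the $h_i$'s to $1$ away from the collars, multiplies the metric on the labyrinth by a huge factor while leaving the third coordinate untouched and keeping $\widetilde X$ uniformly $\ep$-close to $X$ on $\overline{M(\J^\ep)}$ (this is (L\ref{lem: gafa-3}.1) and (L\ref{lem: gafa-3}.6)). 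Any curve from $p_0$ to $\J'$ must cross the labyrinth enough times to accumulate length $>s$, which is (L\ref{lem: gafa-3}.5). The periods are killed because $h$ is taken of the form $1$ outside a contractible piece of each collar, or by a standard implicit-function correction to the $h_i$'s; this is where one must be slightly careful, and I would phrase it exactly as in \cite{AFM}.

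Next I would address the two boundary conclusions. Since $\widetilde X$ is a genuine (real-analytic) conformal minimal immersion, for a generic radius in each conformal collar chart the level curve $\widetilde\g_i$ is an analytic Jordan curve in $M'$ with $\J^\ep<\{\widetilde\g_i\}<\J$, and by taking it inside the tubular neighborhood we get (L\ref{lem: gafa-3}.2), (L\ref{lem: gafa-3}.3) and (L\ref{lem: gafa-3}.4); the only point to check is that $\widetilde X|_{\widetilde\g_i}$ is injective, which holds because $\widetilde\g_i$ can be chosen so close to $\g_i$ that $\widetilde X$ restricted there is a small $C^1$-perturbation of $X|_{\g_i}$, and $X|_{\g_i}$ is an embedded analytic curve (or, if $X|_{\g_i}$ is not already embedded, one first applies the embedding step of \cite{AFM} to make it so before the dilation). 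For the diameter bound (L\ref{lem: gafa-3}.7): fix $p\in\widetilde\g_i$ and let $q\in\g_i$ be its radial projection in the chart. The segment of the $\widetilde X$-geodesic-type path joining $p$ to $\J^\ep$ through the labyrinth shows that $\widetilde X(p)$ is within distance $2s$ of the image of the ``inner'' boundary $\J^\ep$ — more precisely, one compares $\|\widetilde X(p)-\widetilde X(q)\|$ to twice the intrinsic distance across the thin collar, which the labyrinth construction makes $<2s$; combined with $\|\widetilde X(q)-X(q)\|<\ep$ and $\|X(q)-\be_i(\pp_i(q))\|<r$ and $\pp_i(p)=\pp_i(q)$, the triangle inequality together with the Pythagorean split between the (preserved) third coordinate and the first two coordinates yields $\|\widetilde X(p)-\be_i(\pp_i(p))\|<\sqrt{4s^2+r^2}+\ep=R$.

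The main obstacle is the simultaneous control in (L\ref{lem: gafa-3}.5) and (L\ref{lem: gafa-3}.7): the López–Ros dilation needed to force intrinsic distance $>s$ inevitably enlarges the extrinsic size of the collar, and one must verify that it enlarges it by no more than $2s$ in the first two coordinates while not touching the third — this is exactly the balance that produces the constant $R=\sqrt{4s^2+r^2}+\ep$, and it is the part that requires the quantitative labyrinth estimates of \cite[Lemma 3]{AFM} rather than a soft argument. Everything else (period-killing, genericity of the new level curves, persistence of the tubular-neighborhood and $\be_i$-proximity hypotheses on the collar) is routine once the intermediate multicycle $\J'$ is taken close enough to $\J$.
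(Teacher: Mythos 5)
Your global strategy coincides with the paper's (which itself only sketches the argument, importing the machinery of \cite[Lemma 3]{AFM}): a Nadirashvili-type labyrinth in the collar, L\'{o}pez--Ros dilations that blow up the intrinsic metric there while preserving a third coordinate, a new multicycle placed where the intrinsic distance to the unperturbed region lies between $s$ and $2s$, and a Pythagorean split producing $R=\sqrt{4s^2+r^2}+\ep$. However, the way you implement the dilation --- a \emph{single} L\'{o}pez--Ros transformation with one function $h$ gluing local $h_i$'s that are large on the whole labyrinth --- does not deliver ({L\ref{lem: gafa-3}}.7). A L\'{o}pez--Ros transformation preserves the third coordinate only in the one orthonormal frame in which the Weierstrass data are written, so your construction fixes a single direction for the entire collar; but the vector $X(p)-\be_i(\pp_i(p))$ rotates as $p$ runs along $\g_i$, so no fixed frame keeps it aligned with the preserved axis, and the deformation can have a large component along $X(p)-\be_i(\pp_i(p))$. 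The triangle inequality then only gives $\|\til{X}(p)-\be_i(\pp_i(p))\|<2s+r+\ep$, which is strictly worse than $\sqrt{4s^2+r^2}+\ep$; and this loss is fatal downstream, since in the proof of Lemma \ref{lem: gafa-4} the recursion $r_n=\sqrt{r_{n-1}^2+(2c_0/n)^2}+c_0/n^2$ converges precisely because the increments are quadratic (of order $1/n^2$), whereas the linear recursion $r_n=r_{n-1}+2c_0/n+\cdots$ diverges. This is why the paper performs $2N\ee$ \emph{successive} L\'{o}pez--Ros transformations $F_{i-1}^k\mapsto F_i^k$, one per small piece $\varpi_i^k$ of the labyrinth, each written in its own orthonormal frame $S_i^k$ chosen (Property (6.1$_i^k$)) so that on $\varpi_i^k$ the vector $X(p)-\be_k(\pp_k(p))$ is nearly parallel to the preserved axis; only then is the dilation essentially orthogonal to that vector and the square-root bound available.

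Your justification of ({L\ref{lem: gafa-3}}.2) also rests on a false premise. You argue that $\til{X}|_{\til{\g}_i}$ is injective because $\til{\g}_i$ can be taken so close to $\g_i$ that $\til{X}$ there is a small $C^1$-perturbation of $X|_{\g_i}$. But to obtain ({L\ref{lem: gafa-3}}.5) the new cycle must lie in the region $\mathcal{D}$ where the intrinsic distance to $M(\J^{\zeta})$ for the \emph{new} metric is between $s$ and $2s$, i.e.\ exactly where the labyrinth has drastically deformed the immersion; the closeness ({L\ref{lem: gafa-3}}.6) holds only on $\overline{M(\J^{\ep})}$, not near $\J$. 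Moreover the hypotheses of the lemma do not assume $X|_{\g_i}$ embedded, so there is nothing to perturb from. The paper instead fixes $\til{X}=F_{2N}^{\ee}$ first and then obtains the injectivity of $\til{X}|_{\til{\g}_i}$ by an infinitesimal, generic variation of the cycle $\til{\g}_i$ inside $\mathcal{D}$, with no comparison to $X|_{\g_i}$.
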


Actually, the above lemma improves Lemma 3 in \cite{AFM}. The
improvement consists of the addition of Statement ({L\ref{lem:
gafa-3}}.2). Checking the proof given in \cite{AFM}, we notice
that this assertion can be trivially guaranteed. Item ({L\ref{lem:
gafa-3}}.2) holds up to infinitesimal variations of $\til{\J}.$
These variations can be done without loosing the remaining
properties.

\subsubsection{Sketch of the proof of Lemma \ref{lem: gafa-3}}

The proof is inspired by a technique introduced by Mart\'{i}n and
Nadirashvili in \cite{MN}. See \cite[\S 5.1]{AFM} for further
details.

First of all, fix $\omega$ a meromorphic differential on $M'$ so
that $\omega$  has neither zeroes nor poles on $\overline{M(\J)}.$
Then it is well known that $\|\omega\|^2$ is a flat Riemannian
metric on $\overline{M(\J)}.$ Assume $ds^2=\|\omega\|^2.$

Let $\zeta\in]0,\ep[$ be small enough so that $\g_i^{\zeta}\subset \mathcal{T}(\Sigma_i),$ for $i=1,\ldots,\ee.$ Consider $N\in\N$ such that $2/N<\zeta.$

Throughout the sketch of the proof, a set of real positive constants not depending on $N$ will appear. The symbol `$\textrm{const}$' will denote these different constants.

We consider an order relation in the set $I\equiv
\{1,\ldots,2N\}\times\{1,\ldots,\ee\}.$ We say $(j,l)>(i,k)$ if
one of the following situations holds: $l=k$ and $j>i,$ or $l>k.$

The next step consists of the construction of a Nadirashvili's type labyrinth along the cycle $\g_k,$ $k=1,\ldots,\ee.$ Let $\{v_{1,k}, \ldots ,v_{2N,k}\}$ be a set of points in the curve $\g_k$ that divide $\g_k$ into $2N$ equal parts (i.e., curves with the same length). Following the normal projection, we can transfer the above partition to the curve $\g_k^{2/N}$: $\{v_{1,k}^\prime, \ldots, v_{2N,k}'\}$. We define the following sets:

\begin{enumerate}[1]
\item[$\bullet$] $L_{i,k}$ is the minimizing geodesic in $(\overline{M(\J)},ds^2)$ joining $v_{i,k}$ and
$v_{i,k}',$ $(i,k)\in I.$

\item[$\bullet$] $\mathcal{ G}_{j,k}=\g_k^{j/N^3},$ $\forall\, j=0,\ldots, 2N^2$ (recall that $\g_k^{j/N^3}$ means the parallel curve to $\g_k,$ in $M(\J),$ such that the distance between them is $j/N^3$).

\item[$\bullet$] $\mathcal{ A}_k=\bigcup_{j=0}^{N^2-1}\overline{\intc{\mathcal{ G}_{2j+1,k}}\setminus \intc{\mathcal{ G}_{2j,k}}}$ and $\widetilde{\mathcal{ A}}_k=\bigcup_{j=1}^{N^2}\overline{\intc{\mathcal{ G}_{2j,k}}\setminus \intc{\mathcal{ G}_{2j-1,k}}}.$

\item[$\bullet$] $\mathcal{ R}_k= \bigcup_{j=0}^{2N^2} \mathcal{ G}_{j,k},$ $\mathcal{ B}_k= \bigcup_{j=1}^N L_{2j,k},$ and $\widetilde{\mathcal{ B}}_k= \bigcup_{j=0}^{N-1} L_{2j+1,k}.$

\item[$\bullet$] $\mathcal{ L}_k=\mathcal{B}_k \cap \mathcal{A}_k,$ $\mathcal{ \widetilde{L}}_k=\widetilde{\mathcal{B}}_k \cap \widetilde{\mathcal{A}}_k,$ and $H_k=\mathcal{R}_k \cup \mathcal{ L}_k \cup \mathcal{ \widetilde{L}}_k.$

\item[$\bullet$] $\Omega_{N,k}=\{ p \in \intc{(\mathcal{ G}_{2N^2,k})} \setminus \intc{(\mathcal{ G}_{0,k})}\;|\; \dist_{(M,ds)}(p,H_k) \geq {1}/{4N^3}\}.$

\item[$\bullet$] $\Omega_N=\bigcup_{k=1}^\textsc{e} \Omega_{N,k}.$

\item[$\bullet$] $\omega_i^k$ is the union of the curve $L_{i,k}$ and those connected components of $\Omega_{N,k}$ that have nonempty intersection with $L_{i,k}$ for $(i,k)\in I.$

\item[$\bullet$] $\varpi_i^k = \{ p \in M \; | \; \dist_{(M,ds)}(p,\omega_i^k)< \de(N) \},$ where  $\de(N)>0$ is chosen in such a way that the sets $\overline{\varpi}_i^k,$ $(i,k)\in I,$ are pairwise disjoint.
\end{enumerate}

Now, assume that $N$ is large enough so that the following
assertions hold:
\begin{enumerate}[\rm ({A}1)]
\item ${\rm diam}_{(M,ds)}(\varpi_i^k)<{\cte}/{N}.$

\item If $\l^2\cdot ds^2$ is a conformal metric on $\overline{M(\J)}$ that satisfies
\[
\l\geq
\begin{cases}
c & \text{ in } M(\mathcal{ J})\\
c\; N^4 & \text{ in } \Omega_N,
\end{cases}
\]
for $c>0$, and if $\a$ is a curve in $\overline{M(\mathcal{ J})}$
connecting $\gamma_k^{\zeta}$ and $\gamma_k$, for some $k \in \{1,
\ldots,\ee\}$, then $\longui_{\l\cdot ds}(\a)\geq \cte c \; N$.
\end{enumerate}

The next step consists of the construction, for $N$ large enough,
of a sequence $\{F_i^k\;|\;(i,k)\in I\}$ of conformal minimal
immersions (with boundary) defined on $\overline{M(\mathcal{
J})}.$ We consider $F_0^1=X$ and denote $F_0^k=F_{2N}^{k-1},$
$\forall k=2,\ldots,\textsc{e}.$ These immersions must be
constructed to satisfy the following list of properties:
\begin{enumerate}[\rm (1$_{i}^{k}$)]
\item $F_i^k(p)= {\rm Re}\, \left( \int_{p_0}^p \Phi^{i,k}\right),$ where $\Phi^{i,k}=\phi^{i,k}\,\omega.$

\item $\|\phi^{i,k}(p)-\phi^{i-1,k}(p) \| \leq 1/N^2,$ for all $p \in \overline{M(\mathcal{ J})}\setminus \varpi_i^k.$

\item $\|\phi^{i,k}(p) \| \geq N^{7/2},$ for all $p \in \omega_i^k.$

\item $\| \phi^{i,k}(p) \| \geq {\cte}/{\sqrt{N}},$ for all $p \in \varpi_i^k.$

\item $\dist_{\esf^2}(G_i^k(p), G_{i-1}^k(p))<{1}/{N^2},$ for all $p \in \overline{M(\mathcal{ J})}\setminus\varpi_i^k,$ where $\dist_{\esf^2}$ is the intrinsic distance in $\esf^2$ and $G_i^k$ represents the Gauss map of $F_i^k.$

\item There exists an orthonormal  basis of $\R^3,$ $S_i^k=\{ e_1, e_2, e_3 \}$ such that\footnote{Given $v \in \R^3,$ and $S$ a coordinate system, we let $v_{(k,S)}$ denote the $k$-th coordinate of $v$ in $S$. The first two coordinates of $v$ in this basis are represented by $v_{(*,S)}=(v_{(1,S)},v_{(2, S)})$.}
\begin{enumerate}[\rm ({6}.1$_{i}^{k}$)]
\item For any $p \in \overline{\varpi_i^k}$ with $\|X(p)-\be_k({\tt P}_k(p))\|\geq 1/\sqrt{N},$ we have $\| (X(p)-\be_k({\tt P}_k(p)))_{(*,S_i^k)} \| <{\cte}/{\sqrt{N}}.$
\item $(F_{i}^k(p))_{(3,S_i^k)}=(F_{i-1}^k(p))_{(3,S_i^k)},$ for all $p \in \overline{M(\mathcal{ J})}.$
\end{enumerate}

\item $\| F_i^k(p)-F_{i-1}^k(p) \| \leq {\cte}/{N^2},$ $\forall\, p \in \overline{M(\mathcal{ J})} \setminus\varpi_i^k.$
\end{enumerate}
The sequence $\{F_i^k\;|\;(i,k)\in I\}$ is constructed in a
recursive way. We construct $F_i^k$ starting from $F_{i-1}^k$ by
using a L\'{o}pez-Ros transformation. When $i-1=0$ we adopt the
convention that $F_0^k:=F_{2N}^{k-1},$ if $k>1,$ and $F_0^1:=X$.
The same holds for the Weierstrass representations.

Suppose that we have $\{F_j^l\;|\;(j,l)<(i,k)\}.$ Assume the
existence of the frame $S_i^k$ satisfying (6.1$_i^k$). It is
implied by Properties (A1) and (5$_j^l$), $(j,l)<(i,k)$ (we omit
the details on how to obtain it). Let $(g^{i-1,k},\Phi_3^{i-1,k})$
be the Weierstrass data of $F_{i-1}^k$ in $S_i^k.$ To construct
the immersion $F_i^k,$ we consider a positive $\a,$ and a
holomorphic function $h_\a:\overline{M(\J)}\to\C^*$ satisfying
\begin{enumerate}[1]
\item[$\bullet$] $|h_\a-\a|<1/\a,$ in $\omega_i^k.$
\item[$\bullet$] $|h_\a-1|<1/\a,$ in $\overline{M(\mathcal{ J})}\setminus \varpi_i^k.$
\item[$\bullet$] The Weierstrass data
$(g^{i,k}:=g^{i-1,k}/h_\a, \Phi_3^{i,k}:=\Phi_3^{i-1,k})$ give a
well-defined minimal immersion in $\overline{M(\mathcal{ J})}.$
\end{enumerate}
The existence of such a function is known \cite[Lemma 1]{AFM}.
Then, for a large enough value of the parameter $\a,$ the
immersion $F_i^k(p):=\mbox{Re}\int_{p_0}^p\Phi^{i,k}$ whose
Weierstrass data in $S_i^k$ are $(g^{i,k},\Phi_3^{i,k})$ satisfies
the above list of properties.  In this way, we have constructed
the sequence $\{F_i^k\;|\;(i,k)\in I\}$ satisfying Properties
(1$_i^k$)$,\ldots,$(7$_i^k$).

If $N$ is large enough, then the set
\[
\mathcal{D}=\{p\in\overline{M(\J)}\setminus M(\J^{\zeta})\;|\; s<\dist_{(\overline{M(\J)},F_{2N}^\ee)}(p,M(\J^{\zeta}))<2s\}
\]
is nonempty, and $\J$ and $\J^\ep$ are contained in different
connected components of $\overline{M(\J)}\setminus \mathcal{D}.$
To check this claim, use Properties (A2), (2$_i^k$), (3$_i^k$),
(4$_i^k$), $(i,k)\in I,$ and take \eqref{eq:metric} into account.
Then, for any multicycle $\widetilde{\J}$ on $\mathcal{D},$ the
immersion
$\widetilde{X}:=F_{2N}^\ee:\overline{M(\widetilde{\J})}\to\R^3$
satisfies the conclusion of Lemma \ref{lem: gafa-3} except for
({L\ref{lem: gafa-3}}.2). Indeed, ({L\ref{lem: gafa-3}}.1) is
implied by (1$_{2N}^\ee$). Statement ({L\ref{lem: gafa-3}}.6)
trivially follows from (7$_i^k$), $(i,k)\in I.$ From (6$_i^k$),
(7$_i^k$), $(i,k)\in I,$ and the choice of $\widetilde{\J},$ we
obtain, in a more elaborated way (we omit the details),
({L\ref{lem: gafa-3}}.7). The remainder properties follow from the
choice of $\widetilde{\J}$ and the positive $\zeta.$ Furthermore,
it is clear that, up to infinitesimal variations of the cycle
$\til{\g}_i\subset\mathcal{D},$ it can be guaranteed that
$\til{X}(\til{\g}_i)$ has no self-intersections, $\forall
i=1,\ldots,\ee,$ which proves ({L\ref{lem: gafa-3}}.2).


\section{Main lemma}

The next result is the kernel of this paper. It shows that it is
possible to perturb a given minimal surface with boundary and
finite topology outside a compact set in such a way that the
intrinsic diameter of the surface can be increased, while the
extrinsic one is preserved. At the same time, it can be obtained
the embeddedness of the resulting immersion along its boundary
curves.

\begin{lemma}\label{lem: gafa-4}
Let $\J=\{\g_1,\ldots,\g_\ee\}$ be a multicycle on $M',$
$X:\overline{M(\J)}\to\R^3$ a conformal minimal immersion, and
$p_0\in M(\J)$ with $X(p_0)=0.$

Then, for any $\l>0,$ and any $\mu>0$ such that $p_0\in
M(\J^\mu),$ there exist a multicycle
$\gorro{\J}=\{\gorro{\g}_1,\ldots,\gorro{\g}_\ee\}$ and a
conformal minimal immersion
$\gorro{X}:\overline{M(\gorro{\J})}\to\R^3$ with the following
properties:
\begin{enumerate}[\rm ({L\ref{lem: gafa-4}}.1)]
\item $\gorro{X}(p_0)=0.$

\item $\gorro{X}(\gorro{\g}_i)$ is a Jordan curve, for any $i=1,\ldots,\ee.$

\item $\gorro{\g}_i\subset \mathcal{T}(\g_i),$ where $\mathcal{T}(\g_i)$ is a tubular neighborhood of $\g_i,$
for $i=1,\ldots,\ee.$

\item $\J^\mu<\gorro{\J}<\J.$

\item $\l<\dist_{(\overline{M(\gorro{\J})},\gorro{X})}(p,\gorro{\J}),$ $\forall p\in \J^\mu.$

\item $\|\gorro{X}-X\|<\mu$ in $\overline{M(\gorro{\J})}.$


\item $\de^H\big( X(\overline{M(\J)}),
\gorro{X}(\overline{M(\gorro{\J})})\big)<\mu.$
\end{enumerate}
\end{lemma}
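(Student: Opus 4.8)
The plan is to obtain $\gorro X$ from $X$ by a finite iteration of Lemma \ref{lem: gafa-3}, chasing the intrinsic-distance gain upward in geometric steps while keeping the extrinsic displacement summably small. Fix an auxiliary finite family of analytic curves $\be_i:\g_i\to\R^3$ — the natural choice is simply $\be_i:=X_{|\g_i}$, with $\mathcal T(\Sigma_i)$ the tubular neighborhoods and $\pp_i$ the projections as in the set-up before Lemma \ref{lem: gafa-3}; then the hypothesis $\|X(p)-\be_i(\pp_i(p))\|<r$ holds for any $r>0$ (say $r=1$) after shrinking the tube. Choose a strictly decreasing sequence $\mu=\mu_0>\mu_1>\cdots>\mu_n>0$ and a sequence $s_1,\ldots,s_n>0$ of "diameter increments''. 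Starting from $\J_0:=\J$, $X_0:=X$, apply Lemma \ref{lem: gafa-3} with data $(\J_{m-1},X_{m-1},p_0,r_{m-1})$, parameters $s=s_m$ and $\ep=\ep_m$ (with $\ep_m$ small, to be fixed below), and set $\J_m:=\til\J$, $X_m:=\til X$, $r_m:=R_m=\sqrt{4s_m^2+r_{m-1}^2}+\ep_m$. At each stage ({L\ref{lem: gafa-3}}.1)--({L\ref{lem: gafa-3}}.7) hold. Finally put $\gorro\J:=\J_n$ and $\gorro X:=X_n$.

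The key point is to control the four nontrivial conclusions. For ({L\ref{lem: gafa-4}}.5): by ({L\ref{lem: gafa-3}}.5) at the last step, $\dist_{(\overline{M(\J_n)},X_n)}(p,\J_n)>s_n$ for all $p\in\J_{n-1}^{\ep}\supset\J^\mu$ (arrange the nesting $\J^\mu<\J_{n-1}^{\ep_n}$ by choosing $\ep_n$ small, which is possible since $p_0\in M(\J^\mu)$ forces the $\J_m$ to stay trapped between $\J^\mu$ and $\J$ via ({L\ref{lem: gafa-3}}.4)); so taking $s_n>\l$ gives the required bound. (One does not even need the distances to accumulate; a single large $s_n$ at the end suffices, and the earlier steps serve only to create room and the Jordan property — but running several steps costs nothing.) For ({L\ref{lem: gafa-4}}.6): $\|X_m-X_{m-1}\|<\ep_m$ on $\overline{M(\J_{m-1}^{\ep_m})}\supset\overline{M(\J_m)}\supset\overline{M(\gorro\J)}$ by ({L\ref{lem: gafa-3}}.6) and ({L\ref{lem: gafa-3}}.4), so telescoping gives $\|\gorro X-X\|<\sum_{m=1}^n\ep_m<\mu$ on $\overline{M(\gorro\J)}$ provided $\sum\ep_m<\mu$. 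For ({L\ref{lem: gafa-4}}.2) and ({L\ref{lem: gafa-4}}.3): take them directly from ({L\ref{lem: gafa-3}}.2) and ({L\ref{lem: gafa-3}}.3) at the final step, noting $\mathcal T(\g_i)$ can be taken inside the original tube. Property ({L\ref{lem: gafa-4}}.1) is ({L\ref{lem: gafa-3}}.1), and ({L\ref{lem: gafa-4}}.4) is the composition of the nestings $\J^{\ep_m}<\J_m<\J_{m-1}$ together with $\J^\mu<\J_n^{?}$ arranged as above.

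The genuinely new conclusion is the Hausdorff-distance estimate ({L\ref{lem: gafa-4}}.7), which Lemma \ref{lem: gafa-3} does not hand us for free, and this is the step I expect to be the main obstacle. One inclusion is easy: every point of $X_n(\overline{M(\J_n)})$ is within $\sum\ep_m<\mu$ of the corresponding point of $X(\overline{M(\J_n)})\subset X(\overline{M(\J)})$, by the $C^0$ estimate just used (note $\overline{M(\J_n)}\subset\overline{M(\J)}$ since $\J_n<\J$). The reverse inclusion — that $X(\overline{M(\J)})$ lies in a $\mu$-neighborhood of $X_n(\overline{M(\J_n)})$ — requires controlling $X$ on the thin collar $\overline{M(\J_n)}\setminus\overline{M(\J^\mu)}$ that got "cut away'': on $\overline{M(\J^\mu)}$ we again use $\|X-X_n\|<\mu$, while on the collar we must note that this collar maps under $X$ into a small set. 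This is where one uses that $X$ is a fixed continuous (indeed immersed) map on the compact $\overline{M(\J)}$: by uniform continuity of $X$, if the collar $M(\J)\setminus M(\J^\mu)$ is thin enough in the fixed metric $ds$ — equivalently, if $\mu$ is small, which we may also shrink at the outset since the conclusion for smaller $\mu$ implies it for larger — then $X$ of that collar has diameter $<\mu/2$, and a point of it is within $\mu/2$ of $X(\J^\mu)$, hence within $\mu$ of $X_n(\overline{M(\J_n)})\ni$ the $X_n$-image of a nearby point of $\J^\mu$. Assembling these two inclusions yields $\de^H\big(X(\overline{M(\J)}),\gorro X(\overline{M(\gorro\J)})\big)<\mu$. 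I would fix the bookkeeping in the order: first shrink $\mu$ and the tubes; then pick $n$, the $s_m$'s (with $s_n>\l$) and the $\ep_m$'s (with $\sum\ep_m$ small and each small enough to preserve the nestings $\J^\mu<\J_m<\J_{m-1}$); then run the iteration; then verify (L\ref{lem: gafa-4}.1)--(L\ref{lem: gafa-4}.7) in turn, with (L\ref{lem: gafa-4}.7) last.
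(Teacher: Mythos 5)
Your overall strategy---iterate Lemma \ref{lem: gafa-3} and then verify the seven properties---is the right one, and your handling of (L\ref{lem: gafa-4}.1)--(L\ref{lem: gafa-4}.4) is fine, but there are two genuine gaps, and the parenthetical remark that ``a single large $s_n$ at the end suffices'' shows that the central idea of the proof is missing. First, the inclusion you use for the telescoping $C^0$ estimate is backwards: $\J_{m-1}^{\ep_m}<\J_m$ means $\overline{M(\J_{m-1}^{\ep_m})}\subset\overline{M(\J_m)}$, not $\supset$. Property (L\ref{lem: gafa-3}.6) controls $\|X_m-X_{m-1}\|$ only on the \emph{smaller} region $\overline{M(\J_{m-1}^{\ep_m})}$; on the collar $\overline{M(\J_m)}\setminus M(\J_{m-1}^{\ep_m})$---which meets $\overline{M(\gorro\J)}$ near its boundary and is exactly where the L\'{o}pez--Ros deformation is violent---the only control available is the much weaker (L\ref{lem: gafa-3}.7). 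So your telescoping proves $\|\gorro X-X\|<\sum_m\ep_m$ only well inside $\overline{M(\gorro\J)}$, and (L\ref{lem: gafa-4}.6) is not established; the same false estimate is used in your first inclusion for (L\ref{lem: gafa-4}.7). The paper closes this gap by first bounding $\|\gorro X(p)-X(p)\|$ for $p$ on the boundary curves $\gorro\g_i$ via the triangle inequality through $X(\pp_i(p))$, using the accumulated bound $\|X_n(p)-X(\pp_i(p))\|<r_n$ obtained by iterating (L\ref{lem: gafa-3}.7), together with uniform continuity of $X$ on the collar, and then propagating the boundary estimate to all of $\overline{M(\gorro\J)}$ by the maximum principle for harmonic maps.

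Second, and more fundamentally, (L\ref{lem: gafa-3}.7) charges an extrinsic price $R=\sqrt{4s^2+r^2}+\ep$ for an intrinsic gain of $s$: one application with $s>\l$ lets the boundary drift by roughly $2\l$, which ruins (L\ref{lem: gafa-4}.6) and (L\ref{lem: gafa-4}.7) whenever $\l$ is large compared with $\mu$. You are therefore forced to take every $s_m$ small; the paper takes $s_m=c_0/m$ with $c_0<\mu$ and $c_0,r_1$ so small that $r_n$, governed by $r_n=\sqrt{r_{n-1}^2+(2c_0/n)^2}+c_0/n^2$, stays below $\mu/2$, while the intrinsic distances accumulate as $\rho_n=\rho_1+\sum_{i=2}^n c_0/i\to\infty$ and eventually exceed $\l$. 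This balance between the divergent sum $\sum s_m$ and the convergent sum $\sum s_m^2$ is Nadirashvili's key trick and is precisely what your proposal dismisses. Note also that the accumulation $\rho_n<\dist_{(\overline{M(\J_n)},X_n)}(p,\J_n)$ is not automatic: at each stage one must choose $\ep_n$ a posteriori so small that $X_n$ stays close to $X_{n-1}$ on $\overline{M(\J_{n-1}^{\xi})}$ and the distance already gained is not lost before adding the new increment $c_0/n$ from (L\ref{lem: gafa-3}.5); this step is absent from your write-up.
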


\begin{proof}
Consider $c_0<\mu,$ $r_1$ and $\rho_1$ three positive constants which will be specified later. Define
\[
r_n=\sqrt{r_{n-1}^2+\left( \frac{2c_0}{n}
\right)^2}+\frac{c_0}{n^2},\quad \rho_n=\rho_1+\sum_{i=2}^n
\frac{c_0}{i}.
\]
Assume the constants $c_0$ and $r_1$ are small enough so
that
\begin{equation}\label{ecu: gafa-3.6}
\lim_{n\to\infty} r_n<\frac{\mu}2.
\end{equation}

The next step consists of applying Lemma \ref{lem: gafa-3}. In
order to do it, we need a family $\Sigma_1,\ldots,\Sigma_\ee,$ of
analytic cycles on $M'.$ We consider $\Sigma_i:=\g_i,$ $\forall
i=1,\ldots,\ee.$ Hence it is clear that
$\mathcal{I}=\{\Sigma_1,\ldots,\Sigma_\ee\}$ is a multicycle on
$M'.$ We also need a family of analytic curves
$\be_i:\Sigma_i\to\R^3.$ Take $\be_i:=X_{|\g_i},$ $\forall
i=1,\ldots,\ee.$

For any natural $n$ let us construct a family
$\chi_n=\{\J_n,X_n,\ep_n\}$ consisting of
\begin{enumerate}
\item[\rm (1$_{n}$)] $\J_n=\{\g_{n,1},\ldots,\g_{n,\ee}\}$ is a multicycle with $\g_{n,i}\subset \mathcal{T}(\Sigma_i)$ for $i=1,\ldots,\ee.$
\item[\rm (2$_{n}$)] $X_n:\overline{M(\J_n)}\to \R^3$ is a conformal minimal immersion with
$X_n(p_0)=0,$ and such that $X_n(\g_{n,i})$ is a Jordan curve, for
any $i=1,\ldots,\ee$ (only holds for $n\geq 2$).
\item[\rm (3$_{n}$)] $\{\ep_n\}_{n\in\N}$ is a decreasing sequence
of positives such that $\ep_n<c_0/n^2.$
\end{enumerate}
Moreover, the sequence $\{\chi_n\}_{n\in\N}$ will satisfy the
following properties:
\begin{enumerate}
\item[\rm (4$_{n}$)] $\J^\mu<\J_{n-1}^{\ep_n}<\J_n<\J_{n-1}$ (only holds for $n\geq 2$).
\item[\rm (5$_{n}$)]
$\rho_n<\dist_{(\overline{M(\J_n)},X_n)}(p,\J_n),$ for all $p\in
\J^\mu.$
\item[\rm (6$_{n}$)] $\|X_n(p)-X(\pp_i(p))\|<r_n,$ for any $p\in \g_{n,i},$ for any
$i=1,\ldots,\ee$ (recall that
$\pp_i:\mathcal{T}(\Sigma_i)\to\Sigma_i$ is the natural
projection).
\end{enumerate}

The sequence will be obtained following an inductive method. For
the first family, $\chi_1,$ we choose $X_1=X$ and $\J_1=\J.$ This
choice and the hypotheses of the lemma guarantee that Properties
(1$_1$) and (6$_1$) are satisfied. Notice that Property (2$_1$)
might not hold. This is not a problem, it will not be used in the
construction of the remainder elements of the sequence
$\{\chi_n\}_{n\geq 2}.$ Let $\rho_1$ be small enough such that
\[
\rho_1<\dist_{(\overline{M(\J_1)},X_1)}(p,\J_1),\quad \forall p\in
\J^\mu.
\]
This choice implies (5$_1$). Finally, we take
$\ep_1<\min\{c_0,r_1\}.$ Hence, (3$_1$) holds. The family $\chi_1$
is well defined. Moreover, we assume that $\ep_1$ is small enough
so that $\g_i^{\ep_1}\subset \mathcal{T}(\Sigma_i),$ $\forall
i=1,\ldots,\ee,$ and so that given $p$ in the connected component
of $\overline{M(\J)}\setminus M(\J^{\ep_1})$ around $\g_i,$ for
$i=1,\ldots,\ee,$ then
\begin{equation}\label{ecu: gafa-5.7}
\|X(p)-X(\pp_i(p))\|<r_1<\frac{\mu}2.
\end{equation}
This choice is possible since $\J_1=\J$ and $X$ is uniformly
continuous.

Assume that we have defined $\chi_1,\ldots, \chi_{n-1}.$ In order
to construct $\chi_n$ we take a real number $\xi>0$ such that
\begin{equation}\label{ecu: remedios}
\rho_{n-1}<\dist_{(\overline{M(\J_{n-1})},X_{n-1})}(p,\J_{n-1}^{\xi}),\quad
\forall p\in \J^\mu.
\end{equation}
Its existence is implied by  (5$_{n-1}$). Consider
$\ep_n<\min\{\ep_{n-1},c_0/n^2,\xi\}.$ Hence (3$_n$) holds. Now,
we consider the multicycle $\J_n$ and the conformal minimal
immersion $X_n:\overline{M(\J_n)}\to\R^3$ given by Lemma \ref{lem:
gafa-3} for the data
\[
X=X_{n-1},\quad \J=\J_{n-1},\quad r=r_{n-1},\quad
s=\frac{c_0}{n},\quad \ep=\ep_n.
\]
Recall that $\Sigma_i=\g_i$ and $\be_i=X_{|\g_i},$
$i=1,\ldots,\ee.$ Then Properties ({L\ref{lem: gafa-3}}.3), and
({L\ref{lem: gafa-3}}.1) and ({L\ref{lem: gafa-3}}.2) in Lemma
\ref{lem: gafa-3} imply (1$_n$) and (2$_n$), respectively. (4$_n$)
follows from ({L\ref{lem: gafa-3}}.4) and the fact that
$\sum_{k=2}^n\ep_k<\mu.$ In order to check (5$_n$) we assume that
$\ep_n$ was chosen small enough so that
\[
\rho_{n-1}<\dist_{(\overline{M(\J_{n})},X_{n})}(p,\J_{n-1}^{\xi}),\quad
\forall p\in \J^\mu.
\]
This assumption is possible thanks to equation \eqref{ecu:
remedios}, and since ({L\ref{lem: gafa-3}}.6) guarantees that
$X_n$ converges to $X_{n-1}$ uniformly on
$\overline{M(\J_{n-1}^{\xi})},$ as $\ep_n$ goes to zero. From this
inequality, the definition of $\rho_n$ and ({L\ref{lem:
gafa-3}}.5) we conclude (5$_n$). Finally, ({L\ref{lem: gafa-3}}.7)
trivially implies (6$_n$). In this way, we have constructed the
sequence $\{\chi_n\}_{n\in\N}$ satisfying Properties
(1$_n$),$\ldots,$ (6$_n$).

Let us check that, for $n_0$ large enough, the multicycle
$\J_{n_0}$ and the immersion $X_{n_0}$ satisfy the conclusion of
Lemma \ref{lem: gafa-4}. Indeed, Properties (5$_n$), $n\in\N,$ and
the fact that the sequence $\{\rho_n\}_{n\in\N}$ diverges,
guarantee the existence of a natural $n_0$ such that
\begin{equation}\label{ecu: bbii}
\l<\dist_{(\overline{M(\J_n)},X_n)}(p,\J_n),\quad \forall p\in
\J^\mu,\quad \forall n\geq n_0.
\end{equation}
Define the multicycle $\gorro{\J}:=\J_{n_0}$ and the conformal
minimal immersion
$\gorro{X}:=X_{n_0}:\overline{M(\gorro{\J})}\to\R^3.$ Then
(2$_{n_0}$) implies ({L\ref{lem: gafa-4}}.1) and ({L\ref{lem:
gafa-4}}.2). Properties ({L\ref{lem: gafa-4}}.3) and ({L\ref{lem:
gafa-4}}.4) trivially hold from (1$_{n_0}$) and (4$_{n_0}$),
respectively. Inequality \eqref{ecu: bbii} implies ({L\ref{lem:
gafa-4}}.5). Finally, taking into account \eqref{ecu: gafa-5.7},
(6$_{n_0}$) and \eqref{ecu: gafa-3.6}, we obtain
\[
\|X(p)-\gorro{X}(p)\|\leq
\|X(p)-X(\pp_i(p))\|+\|X(\pp_i(p))-\gorro{X}(p)\|<
\]
\[\frac{\mu}2+r_{n_0}<\mu,\quad
\forall p\in \gorro{\g}_i,\quad \forall i=1,\ldots,\ee.
\]
Hence $\|X(p)-\gorro{X}(p)\|<\mu$ for any $p\in \gorro{\J}.$
Therefore the Maximum Principle for harmonic maps guarantees that
this inequality holds for any $p\in \overline{M(\gorro{\J})}.$
This proves Property ({L\ref{lem: gafa-4}}.6). Finally,
({L\ref{lem: gafa-4}}.7) follows from ({L\ref{lem: gafa-4}}.6),
\eqref{ecu: gafa-5.7} and (6$_{n_0}$).
\end{proof}


\section{Compact complete minimal immersions in $\R^3$}

Now, we are able to prove the theorems stated in the introduction.
They follow from the following one.

\begin{theorem}\label{teo: existe}
Let $\J$ be a multicycle on the Riemann surface $M',$ and
$\phi:\overline{M(\J)}\to\R^3$ a conformal minimal immersion.
Then, for any $\ep>0,$ there exist a domain $M_\ep$ and a
conformal complete minimal immersion $\phi_\ep:M_\ep\to\R^3$
satisfying
\begin{enumerate}[\rm (I)]
\item $\overline{M(\J^\ep)}\subset M_\ep\subset \overline{M_\ep}\subset M(\J),$
and $M_\ep$ has the same topological type as $M(\J).$

\item $\phi_\ep$ admits a continuous extension
$\Phi_\ep:\overline{M_\ep}\to \R^3.$

\item $\|\phi-\Phi_\ep\|<\ep$ in $\overline{M_\ep}.$

\item ${(\Phi_\ep)}_{|\Gamma}$ is an embedding, for any
$\Gamma$ connected component of $\partial M_\ep.$

\item $\de^H\big(\phi(\overline{M(\J)}), \Phi_\ep(\overline{M_\ep})
\big)<\ep.$

\item The Hausdorff dimension of $\Phi_\ep(\Gamma)$ is $1,$ for
any $\Gamma$ connected component of $\partial M_\ep.$
\end{enumerate}
\end{theorem}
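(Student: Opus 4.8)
The plan is to construct $\phi_\ep$ as the limit of a sequence of conformal minimal immersions obtained by iterating the Main Lemma (Lemma \ref{lem: gafa-4}), in the classical Nadirashvili style, while simultaneously controlling the Hausdorff dimension of the boundary curves along the way.

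\textbf{Setup and the basic iteration.} First I would fix a summable sequence of positive tolerances and build, recursively, a sequence $\{\gorro{\J}_n, \phi_n\}_{n\in\N}$ where $\gorro{\J}_1 = \J$, $\phi_1 = \phi$, and $\{\gorro{\J}_n, \phi_n\}$ is obtained from $\{\gorro{\J}_{n-1}, \phi_{n-1}\}$ by applying Lemma \ref{lem: gafa-4} with the data $\J = \gorro{\J}_{n-1}$, $X = \phi_{n-1}$, a rapidly growing intrinsic-distance parameter $\l = \l_n \to \infty$, and a small $\mu = \mu_n$ chosen so that $\sum \mu_n < \ep$ and so that the $\mu_n$ shrink fast enough to force uniform convergence. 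The choice of $\mu_n$ at step $n$ must depend on $\phi_{n-1}$ (via its modulus of continuity on a slightly shrunken domain) so that (L\ref{lem: gafa-4}.6) gives both $C^0$ convergence of $\{\phi_n\}$ and nesting $\gorro{\J}_{n-1}^{\mu_n} < \gorro{\J}_n < \gorro{\J}_{n-1}$ with all $\gorro{\J}_n$ trapped between $\J^\ep$ and $\J$. Setting $M_\ep = \bigcup_n M(\gorro{\J}_n)$, one gets a domain of the required topological type with $\overline{M(\J^\ep)}\subset M_\ep \subset \overline{M_\ep}\subset M(\J)$, which is (I).

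\textbf{Completeness, continuous extension, and the $C^0$ and Hausdorff estimates.} The limit immersion $\phi_\ep := \lim \phi_n$ is a conformal minimal immersion on $M_\ep$; a standard argument (the divergence of $\l_n$ together with (L\ref{lem: gafa-4}.5), applied to the fact that any divergent path in $M_\ep$ eventually crosses every $\gorro{\J}_n^{\mu_{n+1}}$) shows $\phi_\ep$ is complete, giving the core of the theorem. Estimates (III) ($\|\phi - \Phi_\ep\| < \ep$) and (V) (the Hausdorff-distance bound) follow by telescoping the estimates (L\ref{lem: gafa-4}.6) and (L\ref{lem: gafa-4}.7) over $n$. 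For (II), the continuous extension $\Phi_\ep$ to $\overline{M_\ep}$, I would show the boundary values converge: the point is that the perturbations near $\partial M_\ep$ are controlled in $C^0$ by the $\mu_n$'s and the curves $\gorro{\g}_{n,i}$ converge (as sets, parametrized via the projections $\pp_i$) to limiting curves; uniform Cauchyness of $\phi_n$ up to the boundary, which requires choosing $\mu_n$ with a little room to spare, yields the continuous extension and simultaneously (IV), since (L\ref{lem: gafa-4}.2) makes each $\phi_n(\gorro{\g}_{n,i})$ a Jordan curve and a fast enough convergence rate preserves injectivity in the limit (as in Mart\'in--Nadirashvili \cite{MN}).

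\textbf{The main obstacle: Hausdorff dimension $1$.} Conclusion (VI) is the delicate part and the one not handled by Lemma \ref{lem: gafa-4} alone. The strategy, following \cite{MN}, is to \emph{interleave} the completeness iteration with a second kind of refinement that keeps the boundary curves ``polygonal-like'' at every scale. Concretely, after each application of Lemma \ref{lem: gafa-4} I would, without appreciably changing anything else, replace $\gorro{\g}_{n,i}$ by a nearby cycle on which $\phi_n$ has image within a controlled tube, and arrange that the image $\Phi_\ep(\Gamma)$ is covered, at scale roughly $\mu_n$, by a number of balls that grows only polynomially — this is exactly where the bound $R = \sqrt{4s^2 + r^2}+\ep$ in (L\ref{lem: gafa-3}.7), propagated through the recursion $r_n = \sqrt{r_{n-1}^2 + (2c_0/n)^2} + c_0/n^2$ defined in the proof of Lemma \ref{lem: gafa-4}, is designed to help: the incremental ``spread'' of the boundary at step $n$ is $O(1/n)$, so summing the covering-number contributions gives, for every $t>1$, a finite $t$-dimensional Hausdorff premeasure, hence $\dim_H \Phi_\ep(\Gamma) \le 1$; the reverse inequality is automatic since $\Gamma$ is a non-degenerate continuum. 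Making this covering count precise — i.e.\ bookkeeping how the labyrinth parameters $N$ at each stage translate into the number and size of the balls needed to cover the boundary image — is the technical heart of the argument and the step I expect to be hardest to get right.
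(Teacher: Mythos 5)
Your overall architecture --- iterate Lemma \ref{lem: gafa-4} with $\l_n\to\infty$ and summable, adaptively chosen $\mu_n$, take $M_\ep$ as the increasing union of the $M(\gorro{\J}_n^{\mu_{n+1}})$, and obtain (I), (II), (III), (V) and completeness by telescoping ({L\ref{lem: gafa-4}}.5)--({L\ref{lem: gafa-4}}.7) --- is exactly the paper's. But the two conclusions that Lemma \ref{lem: gafa-4} does not hand you, (IV) and (VI), are where your sketch has real gaps. For (IV), ``a fast enough convergence rate preserves injectivity'' is the right slogan but omits the device that makes it true: the limit boundary component $\Gamma$ is not any of the curves $\gorro{\g}_{n,i}$, so the Jordan-ness of each $\phi_n(\gorro{\g}_{n,i})$ gives nothing by itself in the limit. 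The paper fixes nested closed tubular neighborhoods $\overline{\mathcal{T}(\g_{n,i})}$ all containing $\Gamma$, requires each $X_n$ to be an embedding of $\overline{\mathcal{T}(\g_{n,i})}$, and forces $\|X_n-X_{n-1}\|<\eta_{n-1}$, where $\eta_{n-1}$ is the quantitative injectivity modulus $\Psi$ of $X_{n-1}$ on that neighborhood; this yields $\|X_n(p)-X_n(q)\|>(1-1/n^2)\,\|X_{n-1}(p)-X_{n-1}(q)\|$ for boundary points separated by more than $1/n$, hence a positive lower bound $\tfrac12\|X_{n_0}(p)-X_{n_0}(q)\|$ surviving to the limit. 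This is an extra constraint on the $\mu_n$, not a consequence of controlling a modulus of continuity.

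For (VI), the mechanism you propose --- propagating the extrinsic spread $r_n=\sqrt{r_{n-1}^2+(2c_0/n)^2}+c_0/n^2$ from the proof of Lemma \ref{lem: gafa-4} and arguing that an $O(1/n)$ incremental spread yields finite $t$-dimensional premeasure for every $t>1$ --- does not work: $r_n$ bounds how far the $n$-th boundary curve has drifted from $\be_i$, and displacement controls neither the length of the curve nor the number of balls of a given radius needed to cover it (the curves may, and do, become arbitrarily long). The paper's bookkeeping is different: at stage $n$ it records $\tau_n\geq\max\{\tau_{n-1}+1,\longui(X_n(\g_{n,i}))\}$, covers $X_n(\g_{n,i})$ by $N_n$ balls of radius $s_n:=\tau_n^{-n}$ centered at equally spaced points with $N_n$ the integer part of $\tau_n/s_n$ (shrinking the tubular neighborhood so that its whole image lies in that union), and then forces all later perturbations to satisfy $\|X_m-X_{m-1}\|<1/(m^2\tau_{m-1}^m)$, so that $\Phi_\ep(\Gamma)$ stays within $s_n$ of $X_n(\g_{n,i})$ and is covered by $N_n$ balls of radius $2s_n$ with $N_n(2s_n)^{1+1/n}<4$; letting $n\to\infty$ kills $\mathcal{H}^t$ for every $t>1$. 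The two essential ingredients are the free choice of the covering scale $s_n$ super-exponentially small relative to the recorded length $\tau_n$, and the requirement that the \emph{tail} of the perturbation be below $s_n$ --- a condition that must be imposed on $\mu_{n+1},\mu_{n+2},\dots$ at the moment $\tau_n$ becomes known. Neither appears in your sketch, so the step you correctly identify as the technical heart is the one your proposed mechanism cannot deliver.
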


\subsection{Proof of Theorem \ref{teo: existe}}

Assume $\J$ consists of $\ee$ cycles. Label $\ve:=\ep/2$ and
consider a multicycle $\J'=\{\g_1,\ldots,\g_\ee\}$ satisfying
\begin{enumerate}[\rm (a)]
\item $\J^\ve<\J'<\J.$
\item $\phi(\g_i)$ is a Jordan curve, for any $i=1,\ldots,\ee.$
\item $\de^H\big( \phi(\overline{M(\J)}), \phi(\overline{M(\J')}) \big)<\ve.$
\end{enumerate}

We use Lemma \ref{lem: gafa-4} to construct, $\forall n\in\N,$ a
family $\chi_n=\{\J_n,\mathcal{T}(\J_n),X_n,\ep_n,\xi_n,\tau_n\},$
where
\begin{enumerate}[1]
\item[$\bullet$] $\J_n=\{\g_{n,1},\ldots,\g_{n,\ee}\}$ is a multicycle on $M'.$
\item[$\bullet$]
$\mathcal{T}(\J_n)=\{\mathcal{T}(\g_{n,1}),\ldots,\mathcal{T}(\g_{n,\ee})\},$
where $\mathcal{T}(\g_{n,i})$ is a tubular neighborhood of
$\g_{n,i}$ in $\overline{M(\J_n)},$ for $i=1,\ldots,\ee.$
\item[$\bullet$] $X_n:\overline{M(\J_n)}\to\R^3$ is a conformal minimal immersion.
\item[$\bullet$] $\{\ep_n\}_{n\in\N}$ and $\{\xi_n\}_{n\in\N}$ are decreasing sequences
of positive real numbers with $\xi_n<\ep_n<6\ve/\pi^2 n^2.$
\item[$\bullet$] $\{\tau_n\}_{n\in\N}$ is an increasing sequence of
positives which diverges to $+\infty.$
\end{enumerate}
Moreover, we will construct the sequence $\{\chi_n\}_{n\in\N}$ so that the following list of properties holds:
\begin{enumerate}[\rm (A$_{n}$)]
\item $X_n(\g_{n,i})$ is a Jordan curve, for any
$i=1,\ldots,\ee.$

\item $\mathcal{T}(\g_{n,i})\subset \mathcal{T}(\g_{n-1,i}),$ $\forall i=1,\ldots,\ee.$

\item $\J_n^{\xi_n}\subset \mathcal{T}(\J_n),$ i.e.,
$\g_{n,i}^{\xi_n}\subset \mathcal{T}(\g_{n,i}),$ for any
$i=1,\ldots,\ee.$

\item $\J_{n-1}^{\xi_{n-1}}< \J_{n-1}^{\ep_n}< \J_n^{\xi_n}<\J_n<\J_{n-1}.$

\item $\|X_n-X_{n-1}\|<\ep_n$ in $\overline{M(\J_n)}.$

\item
$(X_n)_{|\overline{\mathcal{T}(\g_{n,i})}}:\overline{\mathcal{T}(\g_{n,i})}\to\R^3$
is an embedding, for any $i=1,\ldots,\ee.$

\item $\|X_n-X_{n-1}\|<\min\{\eta_{n-1},1/(n^2\tau_{n-1}^n)\},$
in $\overline{\mathcal{T}(\J_n)}\subset\overline{M(\J_n)},$ where
\[
\eta_{n-1}:=\min\left.\left\{
\Psi\left(\overline{\mathcal{T}(\g_{n-1,i})}\;,\;
(X_{n-1})_{|\overline{\mathcal{T}(\g_{n-1,i})}}\;,\;n\right)\;\right|\;i=1,\dots,\ee\right\}.
\]
Recall the definition of $\Psi$ in page \pageref{def: infimo}.

\item $1/\ep_n< \dist_{(\overline{M(\J_n^{\xi_n})},X_n)}(\J_{n-1}^{\xi_{n-1}}, \J_n^{\xi_n}).$

\item $ds_{X_n}\geq \alpha_n\cdot ds_{X_{n-1}}$ in $\overline{M(\J_{n-1}^{\xi_{n-1}})},$
where the sequence $\{\a_k\}_{k\in\N}$ is given by
\[
\a_1:=\frac12\, e^{1/2},\quad \a_k:=e^{-1/2^k}\text{ for }k>1.
\]
Notice that $0<\a_k<1$ and $\{\prod_{m=1}^k \a_m\}_{k\in\N}$
converges to $1/2.$

\item $\de^H\big( X_n(\overline{M(\J_n)}), X_{n-1}(\overline{M(\J_{n-1})})  \big)<\ep_n.$

\item $\forall i=1,\ldots,\ee,$ there exist points
$\a_{1,i}^n,\ldots,\a_{N_n,i}^n\in X_n(\g_{n,i})$ such that
\[
X_n(\overline{\mathcal{T}(\g_{n,i})})\subset
\bigcup_{j=1}^{N_n}\B(\a_{j,i}^n,s_n),
\]
where $s_n=1/\tau_n^n,$ $N_n$ is the integer part of $\tau_n/s_n,$
and $\B(\a_{j,i}^n,s_n)=\{y\in\R^3\;|\;\|\a_{j,i}^n-y\|<s_n\}.$

\end{enumerate}

For the first family we choose $\J_1=\J'$ and $X_1=\phi.$ Hence
Property (b) of $\J'$ guarantees (A$_1$). Notice that we can
choose $\mathcal{T}(\J_1)$ close enough to $\J_1$ so that (F$_1$)
holds. This is possible because of (A$_1$) and the local
injectivity of $X_1.$ Define
\[
\tau_1:=\max\{1\;,\;\max\{\longui(X_1(\g_{1,i}))\;|\;i=1,\ldots,\ee\}\}.
\]
For $i=1,\ldots,\ee,$ let $\a_{1,i}^1,\ldots,\a_{N_1,i}^1$ be
points on $X_1(\g_{1,i})$ with mutual distance along
$X_1(\g_{1,i}).$ The choice of $\tau_1$ implies that this distance
is equal or smaller than $s_1.$ Hence, the union of
$\B(\a_{j,i}^1,s_1)$ is an open set in $\R^3$ which covers the
curve $X_1(\g_{1,i}).$ Therefore we can choose $\mathcal{T}(\J_1)$
close enough to $\J_1$ so that (K$_1$) holds. Finally, consider
positive numbers $\xi_1<\ep_1<6\ve/\pi^2,$ where $\xi_1$ is small
enough so that (C$_1$) holds. Properties (B$_1$), (D$_1$),
(E$_1$), (G$_1$), (H$_1$), (I$_1$) and (J$_1$) do not make sense.

Assume that we have constructed $\chi_1,\ldots,\chi_n.$ In order
to define $\chi_{n+1}$ we consider a sequence
$\{\til{\ep}_m\}_{m\in\N}$ decreasing to zero with
\begin{equation}\label{ecu: peque}
\til{\ep}_m<\min\left\{\frac{6\ve}{\pi^2(n+1)^2}\;,\;\eta_{n}\;,\;
\frac{1}{ \tau_n^{n+1}(n+1)^2} \right\},\quad \forall m\in\N.
\end{equation}
Notice that (F$_{n}$) guarantees that $\eta_{n}>0,$ so the choice
of $\til{\ep}_m$ is possible. Now, apply Lemma \ref{lem: gafa-4}
to the data
\[
\J=\J_n,\quad X=X_n,\quad \l=\frac1{\til{\ep}_m},\quad
\mu=\til{\ep}_m.
\]
Hence, for any $m\in\N,$ we obtain a multicycle $\Gamma_m,$ and a
conformal minimal immersion $Y_m:\overline{M(\Gamma_m)}\to\R^3.$
Statements ({L\ref{lem: gafa-4}}.4) and ({L\ref{lem: gafa-4}}.6)
in Lemma \ref{lem: gafa-4} imply that, if $m$ is large enough,
$\J_n^{\xi_n}< \Gamma_m$ and the sequence $\{Y_m\}_{m\in\N}$
uniformly converges in $\overline{M(\J_n^{\xi_n})}$ to $X_n.$ In
particular, the sequence $\{ds_{Y_m}\}_{m\in\N}$ uniformly
converges in $\overline{M(\J_n^{\xi_n})}$ to $ds_{X_n}.$ Therefore
there exists $m_0\in\N$ large enough so that
\begin{equation}\label{ecu: gafa-7.2}
\J_n^{\xi_n}<\J_n^{\til{\ep}_{m_0}}<\Gamma_{m_0},
\end{equation}
\begin{equation}\label{ecu: gafa-7.3}
ds_{Y_{m_0}}\geq \alpha_{n+1}\cdot ds_{X_n}\quad\text{in
}\overline{M(\J_n^{\xi_n})}.
\end{equation}
Define $\J_{n+1}:=\Gamma_{m_0},$ $X_{n+1}:=Y_{m_0},$ and
$\ep_{n+1}:=\til{\ep}_{m_0}.$ Hence Statement ({L\ref{lem:
gafa-4}}.2) guarantees (A$_{n+1}$). Then we choose
$\mathcal{T}(\J_{n+1}),$ a tubular neighborhood of $\J_{n+1}$ in
$\overline{M(\J_{n+1})},$ close enough to $\J_{n+1}$ so that
(B$_{n+1}$) and (F$_{n+1}$) hold. This choice is possible because
of ({L\ref{lem: gafa-4}}.3), (A$_{n+1}$) and the local injectivity
of $X_{n+1}.$  Define
\[
\tau_{n+1}:=\max\{\tau_n+1\;,\;\max\{\longui(X_{n+1}(\g_{n+1,i}))\;|\;i=1,\ldots,\ee\}\}.
\]
For $i=1,\ldots,\ee,$ let
$\a_{1,i}^{n+1},\ldots,\a_{N_{n+1},i}^{n+1}$ be points on
$X_{n+1}(\g_{n+1,i})$ with mutual distance along
$X_{n+1}(\g_{n+1,i}).$ The choice of $\tau_{n+1}$ implies that
this distance is not larger than $s_{n+1}.$ Therefore the union of
$\B(\a_{j,i}^{n+1},s_{n+1})$ covers the curve
$X_{n+1}(\g_{{n+1},i}).$ Hence we can choose
$\mathcal{T}(\J_{n+1})$ close enough to $\J_{n+1}$ so that
(K$_{n+1}$) holds.

Now, taking into
account \eqref{ecu: gafa-7.2} and ({L\ref{lem: gafa-4}}.5),
we conclude
\[
\frac1{\ep_{n+1}}<
\dist_{(\overline{M(\J_{n+1})},X_{n+1})}(\J_n^{\xi_n},\J_{n+1}).
\]
Then we can choose $\xi_{n+1}\in ]0,\ep_{n+1}[$ small enough so
that (C$_{n+1}$), (D$_{n+1}$) and (H$_{n+1}$) hold. Moreover,
\eqref{ecu: gafa-7.3} implies (I$_{n+1}$). ({L\ref{lem:
gafa-4}}.6) and \eqref{ecu: peque} imply (E$_{n+1}$) and
(G$_{n+1}$). Finally ({L\ref{lem: gafa-4}}.7) guarantees
(J$_{n+1}$). In this way we have finished the construction of the
sequence $\{\chi_n\}_{n\in\N}.$

Now, define the set
\[
M_\ep:=\bigcup_{n\in\N} M(\J_n^{\ep_{n+1}}) =\bigcup_{n\in\N}
M(\J_n^{\xi_n}).
\]
From (D$_n$), $n\in \N,$ we conclude that the set $M_\ep$ is an
expansive union of domains with the same topological type as
$M(\J).$ Therefore elementary topological arguments give us that
$M_\ep$ is a domain with the same topological type as $M(\J).$ On
the other hand, (D$_n$), $n\in \N,$ also imply that
\[
\overline{M_\ep}=\bigcap_{n\in\N} \overline{M(\J_n)}.
\]
Then Properties (E$_n$), $n\in\N,$ guarantee that
$\{X_n\}_{n\in\N}$ is a Cauchy sequence uniformly in
$\overline{M_\ep}$ of continuous maps. Therefore it converges to a
continuous map $\Phi_\ep:\overline{M_\ep}\to\R^3.$ Define
$\phi_\ep:=(\Phi_\ep)_{|M_\ep}:M_\ep\to\R^3.$

Let us check that $\phi_\ep$ satisfies the conclusion of the
theorem.

\noindent$\bullet$ Since Properties (I$_n$), $n\in\N,$ $\phi_\ep$ is a
conformal minimal immersion.

\noindent$\bullet$ Completeness of $\phi_\ep$ follows from Properties
(H$_n$), (I$_n$) and the fact that the sequence
$\{1/\ep_n\}_{n\in\N}$ diverges.

\noindent$\bullet$ Statements (I) and (II) trivially hold (in order to
check (I), take into account Property (a) of $\J'$).

\noindent$\bullet$ Notice that $\sum_{n=1}^\infty
\ep_n<\ve=\ep/2.$ Taking this fact into account, Statement (III)
follows from (E$_n$), $n\in\N,$ and Statement (V) is implied by
(J$_n$), $n\in\N,$ and Property (c) of the multicycle $\J'.$

\noindent$\bullet$ In order to prove (IV) we have to work a little
further. From Statement (I) we obtain that $\partial M_\ep$ has
exactly $\ee$ connected components; one of them in each connected
component of $M(\J)\setminus M(\J^\ep).$ Consider two different
points, $p$ and $q,$ in the same connected component $\Gamma$ of
$\partial M_\ep.$ Assume that $\Gamma$ is the connected component
of $\partial M_\ep$ related with the $i$-th cycle of $\J.$ Then
Properties (B$_n$), (D$_n$), $n\in\N$, and the definition of
$M_\ep,$ imply that
\begin{equation}\label{ecu: dentro}
\{p,q\}\subset\G\subset \overline{\mathcal{T}(\g_{n,i})},\quad
\forall n\in\N.
\end{equation}
On the other hand, there exists $n_0\in\N$ such that
\begin{equation}\label{ecu: >1/n}
\dist_{(M',ds)}(p,q)>1/n, \quad\forall n\geq n_0.
\end{equation}
Then, for any $n> n_0,$ from Property (G$_n$) and the definition
of the constant $\eta_{n-1},$ we have
\[
\|X_{n-1}(p)-X_{n-1}(q)\|\leq
\]
\[
\|X_{n-1}(p)-X_n(p)\|+\|X_{n-1}(q)-X_n(q)\|+\|X_n(p)-X_n(q)\|<
\]
\[
2\eta_{n-1}+\|X_n(p)-X_n(q)\|\leq
\]
\[
\frac1{n^2}\cdot\|X_{n-1}(p)-X_{n-1}(q)\| + \|X_n(p)-X_n(q)\|,
\]
where we have taken \eqref{ecu: dentro} and \eqref{ecu: >1/n} into
account to bound $\eta_{n-1}.$ Hence
\[
\|X_n(p)-X_n(q)\|>\left( 1-\frac1{n^2}\right)\cdot
\|X_{n-1}(p)-X_{n-1}(q)\|,\quad \forall n> n_0.
\]
Then, for any $k>0,$ the above inequalities guarantee
\begin{equation}\label{ecu: cota-f}
\|X_{n_0+k}(p)-X_{n_0+k}(q)\|> \|X_{n_0}(p)-X_{n_0}(q)\|\cdot
\prod_{m=n_0+1}^{n_0+k}\left( 1-\frac1{m^2} \right).
\end{equation}
If we take limits in \eqref{ecu: cota-f} as
$k\to\infty,$ then we obtain that
\[
\|\Phi_\ep(p)-\Phi_\ep(q)\|\geq \frac12
\|X_{n_0}(p)-X_{n_0}(q)\|>0,
\]
where we have taken into account (F$_{n_0}$). This inequality
proves Statement (IV).

\noindent$\bullet$ Finally, let us check Statement (VI). Fix
$n\in\N$ so that $\sum_{k=n}^\infty 1/k^2<1.$ Let $\Gamma$ be the
connected component of $\partial M_\ep$ related with the $i$-th
cycle of $\J,$ and consider $p\in \Gamma.$ Notice that
$p\in\overline{\mathcal{T}(\g_{m,i})},$ $\forall m\in\N.$ Then,
for any $k>n,$ since (G$_m$), $m=n+1,\ldots,k,$ we deduce
\[
\|X_k(p)-X_{n}(p)\|\leq \frac1{k^2 \tau_{k-1}^k}+ \frac1{(k-1)^2
\tau_{k-2}^{k-1}}+\cdots +\frac1{(n+1)^2 \tau_n^{n+1}}.
\]
Then, using that $\{\tau_m\}_{m\in\N}$ is increasing and
$\tau_n\geq 1,$ we obtain
\begin{equation}\label{ecu: dimen}
\|X_k(p)-X_n(p)\|< \left( \sum_{m=n+1}^\infty \frac1{m^2}
\right)\frac1{\tau_n^n} < \frac1{\tau_n^n}.
\end{equation}
If we take limits in \eqref{ecu: dimen} as $k$ goes to infinity,
then we have
\begin{equation}\label{ecu: dimen2}
\|\Phi_\ep(p)-X_n(p)\|<\frac1{\tau_n^n}=s_n,\quad \text{for any
$n$ large enough}.
\end{equation}
On the other hand, Property (K$_n$)
guarantees
\[
X_n(p)\in \bigcup_{j=1}^{N_n}\B(\a_{j,i}^n,s_n),\quad \forall p\in
\Gamma.
\]
Hence, taking into account \eqref{ecu: dimen2}, we have
\[
\Phi_\ep(\Gamma)\subset \bigcup_{j=1}^{N_n}\B(\a_{j,i}^n,2s_n).
\]
Moreover, it is clear that $N_n\cdot (2s_n)^{1+1/n}<4.$ This
inequality particularly means that the Hausdorff measure
$\mathcal{H}^1(\Phi_\ep(\Gamma))<\infty$ and so the Hausdorff
dimension of $\Phi_\ep(\Gamma)$ is at most $1$ (see \cite{M} for
details about Hausdorff measure). However, the Hausdorff dimension
of $\Phi_\ep(\Gamma)$ is at least $1$ (it is a curve).

This proves Statement (VI) and concludes the proof of the theorem.

\subsection{Some consequences of Theorem \ref{teo: existe}}

In this subsection we remark some results that follow
straightforwardly from Theorem \ref{teo: existe}.

The first one is an improvement of Theorem \ref{teo: existe}. We
can trivially refine the behavior of the limit set to obtain the
embeddedness  of $\Phi_\ep$ along $\partial M_\ep.$

\begin{corollary}\label{cor: teo}
Let $\J$ be a multicycle on the Riemann surface $M',$ and
$\phi:\overline{M(\J)}\to\R^3$ a conformal minimal immersion.
Then, for any $\ep>0,$ there exist a domain $M_\ep$ and a
conformal complete minimal immersion $\phi_\ep:M_\ep\to\R^3$
satisfying
\begin{enumerate}[1]
\item[$\bullet$] $\overline{M(\J^\ep)}\subset M_\ep\subset \overline{M_\ep}\subset M(\J),$
and $M_\ep$ has the same topological type as $M(\J).$

\item[$\bullet$] $\phi_\ep$ admits a continuous extension
$\Phi_\ep:\overline{M_\ep}\to \R^3.$

\item[$\bullet$] $\|\phi-\Phi_\ep\|<\ep$ in $\overline{M_\ep}.$

\item[$\bullet$] $(\Phi_\ep)_{|\partial M_\ep}$ is an embedding.

\item[$\bullet$] $\de^H\big(\phi(\overline{M(\J)}), \Phi_\ep(\overline{M_\ep})
\big)<\ep.$

\item[$\bullet$] The Hausdorff dimension of $\Phi_\ep(\partial M_\ep)$ is $1.$
\end{enumerate}
\end{corollary}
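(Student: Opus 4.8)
The plan is to derive Corollary \ref{cor: teo} from Theorem \ref{teo: existe} by a finite post-composition argument. Theorem \ref{teo: existe} already gives everything in the corollary except that the embedding conclusion there is only guaranteed \emph{on each connected component} $\Gamma$ of $\partial M_\ep$ separately, not on all of $\partial M_\ep$ at once. So the only thing to fix is to rule out the possibility that two \emph{different} boundary components have intersecting images, while keeping all the other properties. The crucial observation is that by Statement (I) of Theorem \ref{teo: existe} each boundary component $\Gamma_i$ of $\partial M_\ep$ lies in a fixed connected component of $M(\J)\setminus M(\J^\ep)$, in particular inside a prescribed small tubular-type neighborhood of the corresponding cycle $\g_i$; these neighborhoods can be taken pairwise disjoint. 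Moreover, since the boundary curves $\phi(\g_i)$ of the initial data can be assumed pairwise disjoint (or separated by a definite distance $d_0>0$ in $\R^3$ after shrinking the multicycle $\J$), Statement (III) of the theorem, $\|\phi - \Phi_\ep\|<\ep$, forces $\Phi_\ep(\Gamma_i)$ to stay within $\ep$ of $\phi(\g_i)$ in $\R^3$.

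Concretely, I would first replace $\J$ by a slightly smaller multicycle $\J'$ (using the argument already in the proof of Theorem \ref{teo: existe}, Properties (a)--(c) of $\J'$) so that the Jordan curves $\phi(\g_1),\ldots,\phi(\g_\ee)$ are pairwise disjoint; let $d_0>0$ be their mutual distance in $\R^3$. Then I would apply Theorem \ref{teo: existe} with $\ep$ replaced by $\ep' := \min\{\ep, d_0/3\}$. This yields $M_{\ep'}$ and $\phi_{\ep'}\colon M_{\ep'}\to\R^3$ with all six listed properties; in particular $(\Phi_{\ep'})_{|\Gamma}$ is an embedding for each component $\Gamma$ of $\partial M_{\ep'}$, and $\|\phi-\Phi_{\ep'}\|<\ep'$. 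I would set $M_\ep:=M_{\ep'}$ and $\phi_\ep:=\phi_{\ep'}$.

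It remains to check that $(\Phi_\ep)_{|\partial M_\ep}$ is globally injective. By Statement (IV) of the theorem it is injective on each component; so suppose $p\in\Gamma_i$ and $q\in\Gamma_j$ with $i\neq j$ and $\Phi_\ep(p)=\Phi_\ep(q)$. By Statement (III), $\|\Phi_\ep(p)-\phi(\g_i)\|\leq\|\Phi_\ep(p)-X_n(p)\|+\cdots$; more cleanly, since $p$ lies in the component of $M(\J)\setminus M(\J^\ep)$ around $\g_i$ and $\Phi_\ep$ is within $\ep'$ of $\phi$ on $\overline{M_\ep}$, a point of $\phi(\g_i)$ lies within $2\ep'$ of $\Phi_\ep(p)$ (continuity of $\phi$ across the thin annulus, whose $\phi$-image has small diameter — this is exactly the kind of estimate \eqref{ecu: gafa-5.7} used in Lemma \ref{lem: gafa-4}, so after one further harmless shrinking of $\J'$ the annular $\phi$-oscillation is $<\ep'$). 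Hence $\dist(\Phi_\ep(p),\phi(\g_i))<2\ep'$ and likewise $\dist(\Phi_\ep(q),\phi(\g_j))<2\ep'$, so $\phi(\g_i)$ and $\phi(\g_j)$ come within $4\ep'\leq 4d_0/3$... that is too weak, so I would instead take $\ep':=\min\{\ep, d_0/5\}$, giving $\dist(\phi(\g_i),\phi(\g_j))<4\ep'<d_0$, contradicting the definition of $d_0$. This contradiction shows no two distinct components have overlapping images, so $(\Phi_\ep)_{|\partial M_\ep}$ is an embedding. All remaining bullet points (the inclusion of domains, the continuous extension, the $\ep$-closeness, the Hausdorff distance bound, and the Hausdorff dimension being $1$) are inherited verbatim from Theorem \ref{teo: existe} since $\ep'\leq\ep$, and Statement (VI) for each component combines to give that $\Phi_\ep(\partial M_\ep)$, a finite union of sets of Hausdorff dimension $1$, again has Hausdorff dimension $1$.

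The step I expect to be slightly delicate is the \emph{localization of the boundary image}: making rigorous that $\Phi_\ep(\Gamma_i)$ stays uniformly close in $\R^3$ to $\phi(\g_i)$. This needs both the topological containment from Statement (I) (so $\Gamma_i$ sits in a thin neighborhood of $\g_i$ whose $\phi$-image has controlled diameter, by uniform continuity of $\phi$, after one preliminary shrinking of the multicycle exactly as in \eqref{ecu: gafa-5.7}) and the extrinsic estimate from Statement (III). Everything else is bookkeeping: choosing the final tolerance $\ep'$ small enough in terms of $\ep$ and the separation constant $d_0$ of the initial Jordan curves, and then quoting Theorem \ref{teo: existe}.
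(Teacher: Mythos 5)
Your route is genuinely different from the paper's. The paper does not post-process Theorem \ref{teo: existe}; it refines its construction: strengthen (F$_n$) so that $X_n$ is an embedding on the \emph{union} $\bigcup_{i=1}^{\ee}\overline{\mathcal{T}(\g_{n,i})}$ rather than on each tubular neighborhood separately, define $\eta_{n-1}$ via $\Psi$ of that union, and then the chain of inequalities culminating in \eqref{ecu: cota-f} separates $\Phi_\ep(p)$ from $\Phi_\ep(q)$ also when $p$ and $q$ lie on \emph{different} components of $\partial M_\ep$; this is what the author means by ``trivially refine.'' You instead treat the theorem as a black box and separate distinct boundary components extrinsically, localizing each $\Phi_\ep(\Gamma_i)$ near $\phi(\g_i)$ by combining Statement (III) with the uniform continuity of $\phi$ across the thin annulus, and then choosing the tolerance $\ep'$ small against the mutual distance $d_0$ of the curves $\phi(\g_i)$. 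That mechanism is sound (the $d_0/5$ bookkeeping works, a finite union of sets of Hausdorff dimension $1$ has dimension $1$, and injectivity of a continuous map on the compact set $\partial M_\ep$ does give an embedding), and it has the merit of not reopening the induction.

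The one step you have not justified is the very first one: that after shrinking $\J$ the Jordan curves $\phi(\g_1),\ldots,\phi(\g_\ee)$ may be assumed \emph{pairwise disjoint} in $\R^3$, so that $d_0>0$. Shrinking the multicycle does not by itself achieve this: $\phi$ is only an immersion, and two distinct boundary annuli can have intersecting images no matter how thin you take them. What is needed is a transversality argument: the self-intersection set $S=\{(p,q)\;|\;p\neq q,\ \phi(p)=\phi(q)\}$ of the real-analytic immersion $\phi$ is (generically) of dimension at most $1$ in the four-dimensional product, so a generic choice of the cycles makes $(\g_i'\times\g_j')\cap S=\emptyset$ for all $i,j$. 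This is of exactly the same nature as property (b) of $\J'$, which the paper also asserts without proof, so you are working to the paper's own standard of rigor --- and indeed the paper's internal refinement needs the same pairwise disjointness at the first step of its induction --- but since your entire separation mechanism hinges on $d_0>0$, you should state and prove this genericity fact explicitly rather than attribute it to ``shrinking.'' With that point supplied, your proof is complete.
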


The next result is a density type theorem. We prove that the set
of finite families of curves in $\R^3$ spanned by complete
(connected) minimal surfaces is dense in the set of finite
families of curves spanned by (connected) minimal surfaces, with
the Hausdorff metric.

\begin{corollary}\label{cor: Jordan}
Let $\Sigma$ be a finite family of closed curves in $\R^3$ so that
the Plateau problem for $\Sigma$ admits a solution. Then, for any
$\xi>0,$ there exist a compact Riemann surface $\mathcal{M},$ an open domain $M\subset \mathcal{M}$ and a continuous
map $\Phi:\overline{M}\to\R^3$ such that
\begin{enumerate}[1]
\item[$\bullet$] $\Phi_{|M}$ is a conformal complete minimal immersion.

\item[$\bullet$] $\Phi_{|\partial M}$ is an embedding.

\item[$\bullet$] $\de^H(\Sigma,\Phi(\partial M))<\xi.$

\item[$\bullet$] The Hausdorff dimension of $\Phi(\partial M)$ is $1.$
\end{enumerate}
\end{corollary}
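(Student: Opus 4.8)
The plan is to deduce this corollary directly from Corollary \ref{cor: teo} (equivalently Theorem \ref{teo: existe}). Since the Plateau problem for $\Sigma$ admits a solution, there is a compact Riemann surface $M'$, a multicycle $\J=\{\g_1,\ldots,\g_\ee\}$ on $M'$ (one cycle $\g_i$ for each closed curve in $\Sigma$, with $\partial M(\J)=\cup_i\g_i$ matching the topology of the spanning surface), and a conformal minimal immersion $\phi:\overline{M(\J)}\to\R^3$ such that $\phi_{|\partial M(\J)}$ parametrizes $\Sigma$. (If the solution furnished by Douglas--Rad\'o is only continuous up to the boundary and conformal-harmonic in the interior, I would first replace $\J$ by a slightly smaller multicycle $\J'<\J$ with $\overline{M(\J')}\subset M(\J)$, so that $\phi$ is a genuine conformal minimal immersion on the compact surface-with-boundary $\overline{M(\J')}$; by uniform continuity of $\phi$ one can take $\J'$ close enough to $\J$ that $\de^H(\Sigma,\phi(\partial M(\J')))<\xi/2$. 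I will simply assume $\phi$ is already a conformal minimal immersion on $\overline{M(\J)}$ with $\phi(\partial M(\J))$ within Hausdorff distance $\xi/2$ of $\Sigma$.)

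Next I would apply Corollary \ref{cor: teo} to this $\J$ and $\phi$, with the parameter chosen to be some $\ep<\xi/2$. This produces a domain $M_\ep$ with $\overline{M(\J^\ep)}\subset M_\ep\subset\overline{M_\ep}\subset M(\J)$, having the same topological type as $M(\J)$, together with a conformal complete minimal immersion $\phi_\ep:M_\ep\to\R^3$ extending continuously to $\Phi_\ep:\overline{M_\ep}\to\R^3$ such that $\Phi_{\ep|\partial M_\ep}$ is an embedding, the Hausdorff dimension of $\Phi_\ep(\partial M_\ep)$ is $1$, and $\|\phi-\Phi_\ep\|<\ep$ on $\overline{M_\ep}$. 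I then set $\mathcal{M}:=M'$, $M:=M_\ep$, and $\Phi:=\Phi_\ep$. The first, second and fourth bullets of Corollary \ref{cor: Jordan} are immediate from the corresponding properties of $\Phi_\ep$. For the third bullet, the point is to control $\de^H(\Sigma,\Phi(\partial M))$: every point of $\partial M_\ep$ lies in one of the connected components of $M(\J)\setminus M(\J^\ep)$ adjacent to a cycle $\g_i$, so by uniform continuity of $\phi$ (taking $\ep$ small enough) $\phi(\partial M_\ep)$ is within $\xi/4$ of $\phi(\partial M(\J))$ in Hausdorff distance, hence within $\xi/4+\xi/2$ of $\Sigma$; combining with $\|\phi-\Phi_\ep\|<\ep<\xi/4$ on $\partial M_\ep$ gives $\de^H(\Sigma,\Phi(\partial M))<\xi$ after adjusting the constants.

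The only genuinely delicate point is the very first step: extracting from ``the Plateau problem admits a solution'' a conformal minimal immersion defined and smooth on a compact surface-with-boundary $\overline{M(\J)}$ of the prescribed finite topological type, with boundary mapping close to $\Sigma$. This is where the hypothesis is used essentially, and where one must invoke the classical regularity theory for the Plateau problem (Douglas \cite{D}, Rad\'o \cite{R}, and boundary regularity results) to know that the solution is conformal-harmonic with the expected boundary behaviour; pulling back to a slightly shrunk multicycle then makes it a bona fide input for Theorem \ref{teo: existe}. Once that reduction is in place, the rest is a routine $\ep$-chase through the conclusions of Corollary \ref{cor: teo}, and no new analytic work is required.
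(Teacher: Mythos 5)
Your proposal is correct and follows exactly the route the paper intends: the paper offers no written proof for this corollary, stating only that it ``follows straightforwardly from Theorem~\ref{teo: existe}'' (via Corollary~\ref{cor: teo}), which is precisely the reduction you carry out, including the preliminary step of realizing the Plateau solution as a conformal minimal immersion on $\overline{M(\J)}$ for a multicycle $\J$ on a compact Riemann surface and the final $\ep$-chase for the Hausdorff distance. Your explicit flagging of the regularity/branch-point issue for the Plateau solution is a point the paper itself leaves implicit, so nothing is missing relative to the paper's own argument.
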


The next result shows that compact complete minimal immersions are
not rare. Before state it, recall that any Riemann surface with
finite topology and analytic boundary can be seen as the closure
of an open region of a compact Riemann surface \cite{AS}.

\begin{corollary}\label{cor: densidad}
Complete minimal surfaces are dense in the space of minimal
surfaces spanning a finite set of closed curves in $\R^3,$ endowed
with the topology of the Hausdorff distance.
\end{corollary}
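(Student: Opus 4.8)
The plan is to deduce Corollary \ref{cor: densidad} directly from Corollary \ref{cor: Jordan} together with the remark recalled just before the statement, namely that any Riemann surface with finite topology and analytic boundary is the closure of an open region of a compact Riemann surface \cite{AS}. First I would fix a minimal surface $S$ spanning a finite family $\Sigma$ of closed curves in $\R^3$; that is, $S$ is the image of a conformal minimal immersion $\psi:\overline{N}\to\R^3$, where $N$ is an open region of a compact Riemann surface, $\overline{N}$ has analytic boundary, and $\psi(\partial N)=\Sigma$ (more precisely, $\psi_{|\partial N}$ parametrizes $\Sigma$). By the remark before the statement we may assume $\overline{N}=\overline{M(\J)}$ for a suitable multicycle $\J$ on a compact Riemann surface $M'$, so that $\psi:\overline{M(\J)}\to\R^3$ is exactly of the form required to apply Theorem \ref{teo: existe} and Corollary \ref{cor: Jordan}.

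Next, given $\xi>0$, I would apply Corollary \ref{cor: Jordan}, or directly Corollary \ref{cor: teo} with $\phi=\psi$ and a small $\ep$, to produce a domain $M_\ep$ with the same topological type as $M(\J)$, together with a conformal complete minimal immersion $\phi_\ep:M_\ep\to\R^3$ extending continuously to $\Phi_\ep:\overline{M_\ep}\to\R^3$, with $(\Phi_\ep)_{|\partial M_\ep}$ an embedding, $\|\psi-\Phi_\ep\|<\ep$ on $\overline{M_\ep}$, and $\de^H(\psi(\overline{M(\J)}),\Phi_\ep(\overline{M_\ep}))<\ep$. The immersed surface $\Phi_\ep(\overline{M_\ep})$ is then a complete minimal surface, and the two displayed estimates show that it is $\ep$-close to $S$ in the Hausdorff distance; choosing $\ep<\xi$ gives a complete minimal surface within $\xi$ of $S$. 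Since $S$ and $\xi$ were arbitrary, this exhibits complete minimal surfaces as a dense subset of the space of minimal surfaces spanning finite families of closed curves, with the Hausdorff topology.

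The only genuine point requiring care is the matching of vocabulary: one must make precise what is meant by ``the space of minimal surfaces spanning a finite set of closed curves'' and by the ``topology of the Hausdorff distance'' on it, so that the approximating object $\Phi_\ep(\overline{M_\ep})$ really is a legitimate element of that space and really is $\xi$-close to $S$. Concretely, a minimal surface spanning $\Sigma$ should be taken to be a conformal minimal immersion of (the closure of) a finite-topology Riemann surface with analytic boundary whose boundary maps onto $\Sigma$, and proximity should be measured by the Hausdorff distance between the corresponding compact subsets $\psi(\overline{N})$ of $\R^3$. With these conventions, the density statement is immediate from the chain above. Thus the main (and essentially only) obstacle is bookkeeping of definitions rather than analysis; once the reduction to Theorem \ref{teo: existe} is set up, all the hard work has already been done in the proof of that theorem, and the corollary follows at once.
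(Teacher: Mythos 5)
Your proposal is correct and follows exactly the route the paper intends: the paper states this corollary as an immediate consequence of Theorem \ref{teo: existe} (via Corollary \ref{cor: teo}), using the Ahlfors--Sario remark to realize the given minimal surface as a conformal immersion $\psi:\overline{M(\J)}\to\R^3$ and then invoking the Hausdorff-distance estimate (V) to get $\xi$-proximity. The only content beyond the reduction is the definitional bookkeeping you already flag, so nothing is missing.
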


Theorem \ref{teo: existe} can be seen as an improvement of the following result
\cite[Theorem 2]{AFM}.

\begin{corollary}\label{cor: gafa-teo-2}
Let $\J$ be a multicycle on $M'$ and
$\phi:\overline{M(\J)}\to\R^3$ a conformal minimal immersion.
Then, for any $\ep>0,$ there exists a domain $M_\ep$ in $M',$ with
$\overline{M(\J)}\subset M_\ep,$ and there exists a conformal
complete minimal immersion $\phi_\ep:M_\ep\to\R^3$ such that
\begin{enumerate}[1]
\item[$\bullet$] $\|\phi_\ep-\phi\|<\ep$ in $\overline{M(\J)}.$
\item[$\bullet$] $\de^H\big( \phi(\overline{M(\J)}), \overline{\phi_\ep(M_\ep)} \big)<\ep.$
\end{enumerate}
\end{corollary}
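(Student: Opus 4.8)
The plan is to read off Corollary \ref{cor: gafa-teo-2} from Theorem \ref{teo: existe} almost verbatim; the single mismatch is one of direction. Theorem \ref{teo: existe} manufactures a complete domain $M_\ep$ sitting \emph{inside} $M(\J)$ (indeed $\overline{M_\ep}\subset M(\J)$), whereas the corollary asks for a complete domain $M_\ep$ \emph{containing} $\overline{M(\J)}$. I would bridge this gap by first enlarging the data: I apply the theorem not to $(\J,\phi)$ but to $(\J_0,\til{\phi})$, where $\J_0$ is a multicycle slightly larger than $\J$ and $\til{\phi}$ is an extension of $\phi$ across the new collar.

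First I would extend $\phi$. Since $\phi:\overline{M(\J)}\to\R^3$ is a conformal minimal immersion, its Weierstrass data $(g,\eta)$ are holomorphic/meromorphic on a neighborhood of the compact set $\overline{M(\J)}$ in $M',$ and $\phi$ is an immersion there; hence for a multicycle $\J_0$ with $\J<\J_0$ sufficiently close to $\J$ (so that $\overline{M(\J)}\subset M(\J_0)$) these same data still define a conformal minimal immersion $\til{\phi}:\overline{M(\J_0)}\to\R^3$ with $\til{\phi}|_{\overline{M(\J)}}=\phi.$ Because $M(\J_0)$ has the same topology as $M(\J),$ the homology, and hence the period conditions, are unchanged, so $\til{\phi}$ is genuinely well defined. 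By uniform continuity of $\til{\phi}$ on the compact $\overline{M(\J_0)},$ shrinking the collar $\overline{M(\J_0)}\setminus M(\J)$ (i.e.\ taking $\J_0$ close to $\J$) I would also secure $\de^H\big(\til{\phi}(\overline{M(\J_0)}),\phi(\overline{M(\J)})\big)<\ep/2$: one inequality is free since $\phi(\overline{M(\J)})\subset\til{\phi}(\overline{M(\J_0)}),$ and the reverse holds because every point of the thin collar maps near $\phi(\overline{M(\J)}).$

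Next I would apply Theorem \ref{teo: existe} to $(\J_0,\til{\phi})$ with a parameter $\ep'<\ep/2,$ getting a domain $M_{\ep'}$ and a complete conformal minimal immersion $\phi_{\ep'}:M_{\ep'}\to\R^3$ with continuous extension $\Phi_{\ep'}:\overline{M_{\ep'}}\to\R^3$ satisfying (I)--(VI) relative to $\J_0$ and $\til{\phi};$ I set $M_\ep:=M_{\ep'}$ and $\phi_\ep:=\phi_{\ep'}.$ By (I), $\overline{M(\J_0^{\ep'})}\subset M_{\ep'}\subset\overline{M_{\ep'}}\subset M(\J_0).$ As $\ep'\downarrow 0$ the domains $M(\J_0^{\ep'})$ increase to $M(\J_0),$ and $\overline{M(\J)}$ is a compact subset of the open set $M(\J_0);$ hence for $\ep'$ small one has $\overline{M(\J)}\subset M(\J_0^{\ep'})\subset M_\ep,$ which is the required inclusion $\overline{M(\J)}\subset M_\ep.$ On $\overline{M(\J)}\subset M_\ep$ we have $\phi_\ep=\Phi_{\ep'}$ and $\phi=\til{\phi},$ so (III) gives $\|\phi_\ep-\phi\|<\ep'<\ep$ there, the first bullet. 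For the second, $\overline{\phi_\ep(M_\ep)}=\Phi_{\ep'}(\overline{M_{\ep'}})$ by continuity, and combining (V), namely $\de^H\big(\til{\phi}(\overline{M(\J_0)}),\Phi_{\ep'}(\overline{M_{\ep'}})\big)<\ep',$ with the first-step estimate $\de^H\big(\phi(\overline{M(\J)}),\til{\phi}(\overline{M(\J_0)})\big)<\ep/2$ and the triangle inequality for $\de^H$ yields $\de^H\big(\phi(\overline{M(\J)}),\overline{\phi_\ep(M_\ep)}\big)<\ep/2+\ep'<\ep.$

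The hard part will be the extension step: one must check that the analytic continuation of $(g,\eta)$ across the collar introduces no branch points and that the purely imaginary period conditions persist. Both are automatic for $\J_0$ close enough to $\J,$ since immersivity and the (integral) period conditions are continuous, open constraints that already hold on the compact $\overline{M(\J)};$ everything after that is bookkeeping with the multicycle ordering and two applications of the triangle inequality for the Hausdorff distance.
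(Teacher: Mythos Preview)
Your approach is correct and is essentially the only natural route from Theorem~\ref{teo: existe} (which yields $\overline{M_\ep}\subset M(\J)$) to this corollary (which requires $\overline{M(\J)}\subset M_\ep$): enlarge the data first, then apply the theorem. The paper itself does not write out a proof; the corollary is simply listed under ``consequences of Theorem~\ref{teo: existe}'' and identified with the earlier result \cite[Theorem~2]{AFM} that Theorem~\ref{teo: existe} is said to improve, so there is nothing further to compare against. The one step deserving comment is the extension of $\phi$ across the analytic cycles $\g_i$, and you handle it correctly: the Weierstrass data continue holomorphically to a collar of the analytic boundary, immersivity is an open condition already satisfied on the compact $\overline{M(\J)}$, and the period conditions are unchanged because the inclusion $M(\J)\hookrightarrow M(\J_0)$ is a homotopy equivalence. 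Your bookkeeping with $\J_0^{\ep'}$, the triangle inequality for $\de^H$, and the identification $\overline{\phi_\ep(M_\ep)}=\Phi_{\ep'}(\overline{M_{\ep'}})$ is accurate.
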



\bibliographystyle{amsplain}

\end{document}